\providecommand{\U}[1]{\protect\rule{.1in}{.1in}}
\theoremstyle{plain}
\newtheorem{definition}{Definition}
\newtheorem{example}{Example}
\newtheorem{lemma}{Lemma}
\newtheorem{proposition}{Proposition}
\newtheorem{remark}{Remark}
\newtheorem{theorem}{Theorem}
\numberwithin{equation}{section}
\begin{document}
\title[{\normalsize On $\phi$-1-absorbing prime ideals}]{{\normalsize On $\phi$-1-absorbing prime ideals}}
\author{Eda Y\i ld\i z}
\address{Department of Mathematics, Yildiz Technical University, Istanbul, Turkey.}
\email{edyildiz@yildiz.edu.tr}
\author{\"{U}nsal Tekir}
\address{Department of Mathematics, Marmara University, Istanbul, Turkey.}
\email{utekir@marmara.edu.tr}
\author{Suat Ko\c{c}}
\address{Department of Mathematics, Marmara University, Istanbul, Turkey.}
\email{suat.koc@marmara.edu.tr}
\subjclass[2000]{13A15, 13C05, 54C35}
\keywords{$\phi$-prime ideal, 1-absorbing prime ideal, $\phi$- 1-absorbing prime ideal.}

\begin{abstract}
In this paper, we introduce $\phi$-1-absorbing prime ideals in commutative
rings. Let $R$ be a commutative ring with a nonzero identity $1\neq0$ and
$\phi:\mathcal{I}(R)\rightarrow\mathcal{I}(R)\cup\{\emptyset\}$ be a function
where $\mathcal{I}(R)$ is the set of all ideals of $R$. A proper ideal $I$ of
$R$ is called a $\phi$-1-absorbing prime ideal if for each nonunits $x,y,z\in
R$ with $xyz\in I-\phi(I)$, then either $xy\in I$ or $z\in I$. In addition to
give many properties and characterizations of $\phi$-1-absorbing prime ideals,
we also determine rings in which every proper ideal is $\phi$-1-absorbing prime.

\end{abstract}
\maketitle

\section{Introduction}

Throughout the paper, we focus only on commutative rings with a nonzero
identity. Let $R$ will always denote such a ring. We will denote the set of
all ideals of $R$ by $\mathcal{I}(R)$. A proper ideal $I$ of $R$ is an element
$I\in\mathcal{I}(R)$ with $I\neq R$. For many years, numerous types of ideals
have been developed such as prime, primary, maximal, etc. All of them play
significant role when characterizing a ring. The concept of prime ideals and
its generalizations have a significant place in commutative algebra since they
are used in understanding the structure of rings. Recall that a proper ideal
$I\ $of $R\ $is said to be a \textit{prime ideal }if whenever $xy\in I$ for
some $x,y\in R,$ then either $x\in P$ or $y\in P$ \cite{AtMac}. The importance
of prime ideals led many researchers to work prime ideals and its
generalizations. See, for example, \cite{Ba2}, \cite{Be} and \cite{KoUlTe}. In
\cite{AnSmi}, Anderson and Smith introduced a notion of weakly prime ideal
which is a generalization of prime ideals. A proper ideal $I$ of $R$ is called
\textit{weakly prime ideal} if $0\neq xy\in I$ for some elements $x,y\in I$
implies that $x\in I$ or $y\in I$. They gave many results concerning weakly
prime ideals and used it to study factorization in commutative rings with zero
divisors. Also, they gave necessary and sufficient conditions so that any
proper ideal of $R$ can be written as a product of weakly prime ideals. It is
clear that every prime ideal is weakly prime but the converse is not true in
general. Afterwards, Badawi, in his celebrated paper \cite{Ba1}, introduced
the notion of 2-absorbing ideals and used them to characterize Dedekind
domains. Recall from \cite{Ba1}, that a nonzero proper ideal $I$ of $R$ is
called \textit{2-absorbing ideal} if $xyz\in I$ for some $x,y,z\in R$ implies either $xy\in I$ or $xz\in I$ or $yz\in I$. Note that every prime ideal is
also a 2-absorbing ideal. After this, over the past decades, $2$-absorbing
version of ideals and many generalizations of 2-absorbing ideals attracted
considerable attention by many researchers in \cite{BaTeYe}, \cite{TeKoOrSh}
and \cite{KoUrTe}. In \cite{Dar2}, in order to study unique factorization
domain, Bhatwadekar and Sharma defined almost prime ideals which is a
generalization of prime ideals. A proper ideal $I$ is called \textit{almost
prime ideal} if $xy\in I-I^{2}$ for some $x,y\in R$ implies that $x\in I$ or
$y\in I$. They investigated the relations among the prime ideals, pseudo prime
ideals and almost prime ideals of $R$. Badawi and Darani in \cite{BaDa}
defined and studied weakly 2-absorbing ideals which is a generalization of
weakly prime ideals. A proper ideal $I$ of $R$ is called a \textit{weakly
2-absorbing ideal} if for each $x,y,z\in R$ with $0\neq xyz\in I$, then either
we have $xy\in I\ $or $xz\in I$ or $yz\in I$.\ In \cite{AnMa}, Anderson and
Bataineh defined a new class of prime ideals. A proper ideal $I$ of $R$ is
called \textit{$\phi$-prime ideal} if whenever $xy\in I-\phi(I)$ for some
$x,y\in R$ then either $x\in I$ or $y\in I$ where $\phi:\mathcal{I}%
(R)\rightarrow\mathcal{I}(R)\cup\{\emptyset\}$ is a function. They showed that
a prime ideal and a $\phi$-prime ideal have some similar properties. Recently,
in \cite{YasNik}, Yassine et al. introduced 1-absorbing prime ideal. This type
of ideals is a generalization of prime ideals. A proper ideal $I$ of $R$ is
called \textit{1--absorbing prime ideal} if whenever $xyz\in I$ for some
nonunits $x,y,z\in R$ then either $xy\in I$ or $z\in I$. Note that every prime
ideal is 1-absorbing prime and every 1-absorbing prime ideal is 2-absorbing
ideal. The converses are not true. For instance, $P=6\mathbb{%
\mathbb{Z}
}$ is a 2-absorbing ideal of $\mathbb{%
\mathbb{Z}
}$ but not a 1-absorbing prime ideal and also $P=(\overline{0})$ is a
1-absorbing prime ideal of $\mathbb{%
\mathbb{Z}
}_{4}$ which is not prime. They characterized 1-absorbing prime ideals of some
special rings such as valuation domain and principal ideal domain. They also
gave Prime Avoidance Theorem for 1-absorbing prime ideals. More currently,
Ko\c{c} et al. defined weakly-1-absorbing prime ideals which is a
generalization of 1-absorbing prime ideal \cite{w-1}. A proper ideal $I$ of
$R$ is called \textit{weakly-1-absorbing prime ideal} if $0\neq xyz\in I$ for
some nonunits $x,y,z\in R$ implies that $xy\in I$ or $z\in I$. They gave many
properties of this class of ideals and characterized rings that every proper
ideal is weakly-1-absorbing ideal. Moreover, they investigated
weakly-1-absorbing ideal in $C(X)$, which is the set of all real-valued
continuous functions of topological space $X$.

In this paper, we define $\phi$-1-absorbing prime ideals as a new class of
ideals which is generalization of 1-absorbing prime ideals. A proper ideal $I$
of $R\ $is called \textit{$\phi$-1-absorbing prime ideal} if whenever $xyz\in
I-\phi(I)$ for some nonunits $x,y,z\in R$ then $xy\in I$ or $z\in I$.\ Among
other results in this paper, we give some relations between $\phi$-1-absorbing
prime ideals and other classical ideals such as weakly prime ideals, $\phi
$-prime ideals, 1-absorbing prime ideals and weakly 1-absorbing prime ideals
(See, Proposition \ref{pfirst}). In particular, we show that every $\phi
$-prime ideal is also a $\phi$-1-absorbing prime ideal. But the converse is
not true in general (See, Example \ref{ex3}). Hovewer, we give a condition
under which any $\phi$-1-absorbing prime ideal is $\phi$-prime (See, Theorem
\ref{tp1-abs}). Also, we give some characterizations of $\phi$-1-absorbing
prime ideals in general rings, in factor ring, in localization of rings, in
cartesian product of rings (See, Theorem \ref{tmm}, Theorem \ref{tfac},
Theorem \ref{loc}, Theorem \ref{tgen}). Finally, we determine rings over which
every ideal is almost 1-absorbing prime ideal (See, Theorem \ref{tmain}).

\section{Characterization of $\phi$-1-absorbing prime ideals}

Let $R$ be a commutative ring. Define a function $\phi:\mathcal{I}%
(R)\rightarrow\mathcal{I}(R)\cup\{\emptyset\}$. This function maps an ideal of
$R$ to an ideal of $R$ or $\emptyset$.

\begin{definition}
Let $R$ be a ring and $I$ be a proper ideal of $R$. $I$ is called $\phi
$-1-absorbing prime ideal of $R$ if whenever $xyz\in I-\phi(I)$ for some
nonunits $x,y,z\in R$ then $xy\in I$ or $z\in I$.
\end{definition}

The following notations will be used for the rest of the paper.

\begin{example}
Let $R$ be a commutative ring and $\phi_{\alpha}:\mathcal{I}(R)\rightarrow
\mathcal{I}(R)\cup\{\emptyset\}$ be a function. The following gives types of
1-absorbing prime ideals corresponding to $\phi_{\alpha}$. \newline

$\phi_{\emptyset} \quad\quad\phi(I)=\emptyset\quad\quad\quad\quad
\text{1-absorbing prime ideal}$

$\phi_{0} \quad\quad\phi(I)=0 \quad\quad\quad\quad\text{weakly-1-absorbing
prime ideal}$

$\phi_{2} \quad\quad\phi(I)=I^{2} \quad\quad\quad\text{almost-1-absorbing
prime ideal}$

$\phi_{n} \quad\quad\phi(I)=I^{n} \quad\quad\quad\text{n-almost-1-absorbing
prime ideal}$

$\phi_{w} \quad\quad\phi(I)=\bigcap_{n=1}^{\infty}I^{n} \quad
\text{w-1-absorbing prime ideal}$

$\phi_{1} \quad\quad\phi(I)=I \quad\quad\quad\quad\text{any ideal}$\newline

Consider two functions $\phi,\psi:\mathcal{I}(R)\rightarrow\mathcal{I}%
(R)\cup\{\emptyset\}$. Then $\phi\leq\psi$ if $\phi(I)\subseteq\psi(I)$ for
all ideals of $R$. Moreover, note that $\phi_{\emptyset}\leq\phi_{0}\leq
\phi_{w}\leq\cdots\leq\phi_{n+1}\leq\phi_{n}\leq\cdots\leq\phi_{2}\leq\phi
_{1}$.

We will assume that $\phi(I)\subseteq I$ throughout the paper.
\end{example}

\begin{proposition}
\label{pfirst}(i)\ Let $I$ be a proper ideal of $R$ and $\phi,\psi
:\mathcal{I}(R)\rightarrow\mathcal{I}(R)\cup\{\emptyset\}$ be two functions with
$\phi\leq\psi$. If $I$ is a $\phi$-1-absorbing prime ideal, then $I$ is a
$\psi$-1-absorbing ideal.

(ii) $I\ $is a 1-absorbing prime ideal $\Rightarrow$ $I\ $is a weakly
1-absorbing prime ideal $\Rightarrow$ $I\ $is a $w$-1-absorbing prime ideal
$\Rightarrow$\ $I\ $is an $n$-almost 1-absorbing prime ideal for each $n\geq2$
$\Rightarrow$ $I$ is an almost 1-absorbing prime ideal.$\ $

(iii)\ $I\ $is an $n$-almost 1-absorbing prime ideal if and only for each
$n\geq2\ $if and only if $I\ $is a $w$-1-absorbing prime ideal.

(iv)\ Every $\phi$-prime ideal is a $\phi$-1-absorbing prime ideal.
\end{proposition}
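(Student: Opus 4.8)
The plan is to prove each of the four parts in order, most of which follow directly from unwinding the definitions and the chain of inequalities among the $\phi_\alpha$ functions recorded in the preceding example.

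For part (i), suppose $I$ is $\phi$-1-absorbing prime and $\phi \leq \psi$. Take nonunits $x,y,z \in R$ with $xyz \in I - \psi(I)$. Since $\phi(I) \subseteq \psi(I)$, we have $I - \psi(I) \subseteq I - \phi(I)$, so $xyz \in I - \phi(I)$. The $\phi$-1-absorbing hypothesis then yields $xy \in I$ or $z \in I$, which is exactly the $\psi$-1-absorbing conclusion. First I would write this containment $I - \psi(I) \subseteq I - \phi(I)$ explicitly, since it is the engine of the whole argument.

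For part (ii), I would observe that the displayed chain $\phi_{\emptyset} \leq \phi_0 \leq \phi_w \leq \cdots \leq \phi_{n+1} \leq \phi_n \leq \cdots \leq \phi_2 \leq \phi_1$ from the example translates, via part (i), into exactly the chain of implications asserted: a 1-absorbing prime ideal is $\phi_{\emptyset}$-1-absorbing prime, a weakly-1-absorbing prime ideal is $\phi_0$-1-absorbing prime, and so on. Each arrow is an instance of part (i) applied to consecutive terms of the chain, so I would simply cite part (i) and the relevant inequality for each implication. Part (iii) is the claim that the $n$-almost ($n \geq 2$) and $w$-1-absorbing notions coincide. The forward direction ($n$-almost $\Rightarrow$ $w$) is again part (i) via $\phi_w \leq \phi_n$. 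For the reverse, I would argue directly: if $I$ is $w$-1-absorbing prime and $xyz \in I - I^n$ with $x,y,z$ nonunits, then since $\bigcap_{m=1}^{\infty} I^m \subseteq I^n$ we get $xyz \in I - \bigcap_{m=1}^{\infty} I^m$, and $w$-1-absorbing primeness gives $xy \in I$ or $z \in I$; this shows $I$ is $n$-almost 1-absorbing prime for that $n$. (The quantifier phrasing in the statement is slightly garbled, but the content is the two-way equivalence.)

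For part (iv), let $I$ be a $\phi$-prime ideal, and take nonunits $x,y,z \in R$ with $xyz \in I - \phi(I)$. Setting $a = xy$ and $b = z$, we have $ab \in I - \phi(I)$, so $\phi$-primeness gives $a \in I$ or $b \in I$, i.e. $xy \in I$ or $z \in I$, which is precisely the $\phi$-1-absorbing conclusion. The only subtlety is that $\phi$-primeness is stated for arbitrary $a,b \in R$ with no nonunit restriction, so the restriction to nonunits in the $\phi$-1-absorbing definition only makes the hypothesis weaker and causes no difficulty. I expect the main obstacle to be purely expository rather than mathematical: namely, stating part (iii) precisely (fixing the quantifier over $n$) and making clear which implications are direct consequences of part (i) versus which require the small independent argument in the reverse direction of (iii).
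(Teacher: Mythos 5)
Parts (i), (ii) and (iv) are correct and follow the paper's own proof essentially verbatim: (i) is the containment $I-\psi(I)\subseteq I-\phi(I)$, (ii) is (i) applied along the chain $\phi_{\emptyset}\leq\phi_{0}\leq\phi_{w}\leq\cdots\leq\phi_{n}\leq\cdots\leq\phi_{2}$, and (iv) is the observation that $\phi$-primeness applied to the pair $(xy,z)$ gives the conclusion (indeed the nonunit restriction only weakens the hypothesis).

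Part (iii), however, contains a genuine gap: both of your arguments prove the \emph{same} implication, and the nontrivial one is never proved. Since $\phi_{w}\leq\phi_{n}$, part (i) yields ``$w$-1-absorbing $\Rightarrow$ $n$-almost 1-absorbing for each $n\geq 2$'' --- not the direction you attribute to it. Your ``direct'' argument for the supposed reverse direction (take $xyz\in I-I^{n}$, note $\bigcap_{m=1}^{\infty}I^{m}\subseteq I^{n}$, hence $xyz\in I-\bigcap_{m=1}^{\infty}I^{m}$, and apply $w$-1-absorbing primeness) again proves ``$w$ $\Rightarrow$ $n$-almost.'' What is missing is the converse: if $I$ is $n$-almost 1-absorbing prime for \emph{every} $n\geq 2$, then $I$ is $w$-1-absorbing prime. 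The paper's argument for this is short but goes in the opposite logical direction from yours: given nonunits $x,y,z$ with $xyz\in I-\bigcap_{n=1}^{\infty}I^{n}$, the element $xyz$ lies outside $I^{m}$ for some specific $m$ (and $m\geq 2$ since $xyz\in I$); then $xyz\in I-I^{m}$, and applying $m$-almost 1-absorbing primeness for that particular $m$ gives $xy\in I$ or $z\in I$. The key move --- extracting a single witness $m$ from the failure of membership in the intersection and invoking the hypothesis for that $m$ --- is absent from your proposal, so the equivalence as stated is not established.
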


\begin{proof}
(i):\ Assume that $I$ is a $\phi$-1-absorbing prime ideal. Let $xyz\in
I-\psi(I)$ for some nonunits $x,y,z\in R$. Then, $xyz\in I-\phi(I)$ and since
$I$ is $\phi$-1-absorbing ideal, $xy\in I$ or $z\in I$ which completes the proof.

(ii):\ Follows from the fact that $\phi_{\emptyset}\leq\phi_{0}\leq\phi
_{w}\leq\cdots\leq\phi_{n+1}\leq\phi_{n}\leq\phi_{2}$ and (i).

(iii):\ By (ii), we know that if $I\ $is a $w$-1-absorbing prime ideal, then
$I\ $is an $n$-almost 1-absorbing prime ideal for each $n\geq2$. Now, assume
that $I\ $is an $n$-almost 1-absorbing prime ideal if and only for each
$n\geq2.\ $Let $xyz\in I-\bigcap_{n=1}^{\infty}I^{n}$ for some nonunits
$x,y,z\in R.\ $Then there exists $m\geq2\ $such that $xyz\notin I^{m}.\ $Since
$I\ $is an $m$-almost 1-absorbing prime ideal of $R\ $and $xyz\in I-I^{m}%
,\ $then either we have $xy\in I$ or $z\in I.\ $

(iv):\ It is clear.
\end{proof}

\begin{example}
\label{ex1}\textbf{(weakly 1-absorbing prime ideal that is not 1-absorbing
prime ideal)} Let $p,q\ $be distinct prime numbers and consider the ring $R=%
\mathbb{Z}
_{pq^{2}}.\ $Then $I=(\overline{0})\ $is a weakly 1-absorbing prime ideal of
$R.\ $Since $\overline{p}\overline{q}\overline{q}\in I\ $and$\ \overline
{p}\overline{q},\overline{q}\notin I,\ I\ $is not a 1-absorbing prime ideal of
$R.\ $
\end{example}

\begin{example}
\textbf{(w-1-absorbing prime ideal that is not weakly 1-absorbing prime ideal)
}Let $I\ $be an idempotent ideal of $R,\ $that is, $I=I^{2}.\ $Then $I\ $is a
w-1-absorbing prime ideal since $I^{n}=I$ for each $n\geq2.\ $But $I\ $may not
be a weakly 1-absorbing prime ideal of $R.\ $For instance, take $R=%
\mathbb{Z}
_{2}^{4}$ and $I=%
\mathbb{Z}
_{2}\times(0)\times(0)\times(0).\ $Then $I\ $is a w-1-absorbing prime ideal
since it is idempotent. Now, take the nonunits $x=(1,1,1,0),y=(1,1,0,1)\ $and
$y=(1,0,1,1)$ in $R.\ $Then $0\neq xyz\in I\ $but $xy,z\notin I.\ $So it
follows that $I\ $is not a weakly 1-absorbing prime ideal of $R.$
\end{example}

\begin{example}
\label{ex3}($\phi$\textbf{-1-absorbing prime ideal that is not }$\phi
$\textbf{-prime}) Take $R\ $as in Example \ref{ex1} and consider the ideal
$I=(\overline{q^{2}})$ of $R.\ $Suppose that $\phi(I)=(\overline{0}).\ $Then
$I\ $is not $\phi$-prime since $\overline{q}\overline{q}\in I-\phi(I)\ $and
$\overline{q}\notin I.\ $Now, take nonunits $\overline{x},\overline
{y},\overline{z}\in R\ $such that $\overline{0}\neq\overline{x}\overline
{y}\overline{z}\in I.\ $Then it is clear that $q^{2}|xyz$ and $pq^{2}\nmid
xyz.\ $If $q^{2}|xy,\ $then we are done. So assume that $q^{2}\nmid xy.\ $On
the other hand, since $q^{2}|xyz,\ $we have $q|z.\ $If $q^{2}|z,\ $again we
are done. So we may assume that $q|z\ $but $q^{2}\nmid z.\ $Since
$q^{2}|xyz,\ q|z\ $and $q^{2}\nmid z,\ $we have either $q|x\ $or
$q|y.\ $Without loss of generality, suppose that $q|x\ $but $q\nmid y.\ $Since
$\overline{y}\ $is not unit, we have $p|y\ $and in this case $\overline
{x}\overline{y}\overline{z}=\overline{0}\ $which is a contradiction.
Therefore, we have either $q^{2}|xy\ $or $q^{2}|z$, namely, $xy\in I\ $or
$z\in I.\ $
\end{example}

\begin{theorem}
\label{tmm}Let $R$ be a commutative ring and $I$ a proper ideal of $R$. The
following statements are equivalent.

(i) $I$ is a $\phi$-1-absorbing prime ideal of $R$.

(ii)\ For each nonunits $x,y\in R$ with $xy\not \in I$ implies $(I:xy)=I\cup
(\phi(I):xy).$

(iii)\ For each nonunits $x,y\in R$ with $xy\notin I$ gives either $(I:xy)=I$ or
$(I:xy)=(\phi(I):xy).$

(iv) For each nonunits $x,y\in R$ and proper ideal $J$ of $R$ such that
$xyJ\subseteq I$ but $xyJ\not \subseteq \phi(I)$ implies either $xy\in I$ or
$J\subseteq I$.

(v) For each nonunit $x\in R$ and proper ideals $J,K$ of $R$ such that
$xJK\subseteq I$ but $xJK\not \subseteq \phi(I)$, either $xJ\subseteq I$ or
$K\subseteq I.$

(vi) For each proper ideals $J,K,L$ of $R$ such that $JKL\subseteq I$ but
$JKL\not \subseteq \phi(I)$, either $JK\subseteq I\ $or $L\subseteq I$.
\end{theorem}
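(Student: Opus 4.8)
The plan is to prove all six statements equivalent through two chains meeting at (i): the colon-ideal characterizations via (i)$\Rightarrow$(ii)$\Rightarrow$(iii)$\Rightarrow$(i), and the ideal-theoretic ones via the cycle (i)$\Rightarrow$(iv)$\Rightarrow$(v)$\Rightarrow$(vi)$\Rightarrow$(i). Only (vi)$\Rightarrow$(i) is immediate: for nonunits $x,y,z$ with $xyz\in I-\phi(I)$, apply (vi) to the proper ideals $(x),(y),(z)$ and use $(x)(y)(z)=(xyz)$ to read off $xy\in I$ or $z\in I$. Throughout I will use that every element of a proper ideal is a nonunit, so the nonunit hypotheses in (i), (iv), (v) are met automatically whenever I select elements from the ideals involved.

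For the colon-ideal chain I would first record two observations: because $\phi(I)\subseteq I$ one always has $I\cup(\phi(I):xy)\subseteq(I:xy)$, and whenever $xy\notin I$ but $xyw\in I$ the element $w$ is forced to be a nonunit, since a unit $w$ would give $xy\in I$. For (i)$\Rightarrow$(ii) I take $w\in(I:xy)$ with $xyw\notin\phi(I)$; then $x,y,w$ are nonunits with $xyw\in I-\phi(I)$, so (i) yields $xy\in I$ or $w\in I$, and as $xy\notin I$ we get $w\in I$, giving the reverse inclusion. The passage (ii)$\Rightarrow$(iii) is the standard fact that an ideal equal to the union of two of its subideals must coincide with one of them (no abelian group is the union of two proper subgroups), applied to $(I:xy)=I\cup(\phi(I):xy)$. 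For (iii)$\Rightarrow$(i): if $xyz\in I-\phi(I)$ with $xy\notin I$, then $z\in(I:xy)$, and the alternative $(I:xy)=(\phi(I):xy)$ would force $xyz\in\phi(I)$, a contradiction, so $(I:xy)=I$ and hence $z\in I$.

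For the ascending cycle I would climb from elements to ideals one factor at a time, each implication replacing a single element by an ideal: (i)$\Rightarrow$(iv) replaces the element $z$ by an ideal, (iv)$\Rightarrow$(v) replaces the element $y$ by an ideal, and (v)$\Rightarrow$(vi) replaces the element $x$ by an ideal. All three run on the same device. Assuming both conclusions fail, I fix a witness realizing each failure (for (i)$\Rightarrow$(iv), an element $c\in J$ with $c\notin I$) and argue that every relevant product lies in $\phi(I)$, contradicting the hypothesis that the triple product is not contained in $\phi(I)$.

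The hard part will be the $\phi(I)$-exception in these upward steps: a product such as $xyd$ may a priori sit inside $\phi(I)$, so the previous statement cannot be invoked directly. I would dispose of the exceptional subcase (say $d\in I$) by passing from $d$ to $d+c$, which escapes $I$; the non-exceptional case then applies to both $d+c$ and $c$, and subtracting the two memberships gives $xyd\in\phi(I)$, using that $\phi(I)$ is an ideal and so closed under subtraction. The same single-element perturbations $a\mapsto a+a_{0}$ drive (iv)$\Rightarrow$(v) and (v)$\Rightarrow$(vi), letting me conclude respectively that $xJK\subseteq\phi(I)$ and $JKL\subseteq\phi(I)$, the contradictions that close the cycle. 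Beyond this bookkeeping everything reduces to specialization or the union-of-subgroups lemma.
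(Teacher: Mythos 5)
Your proposal is correct and rests on exactly the same devices as the paper's proof: the union $(I:xy)=I\cup(\phi(I):xy)$ resolved by the two-subgroups lemma, the perturbation trick ($z\mapsto z+a$, $a\mapsto a+b$) to absorb the $\phi(I)$-exception when lifting from elements to ideals, and specialization to principal ideals for (vi)$\Rightarrow$(i). The only difference is cosmetic rewiring of the implication graph — you close two loops at (i) (adding the short steps (iii)$\Rightarrow$(i) and a direct (i)$\Rightarrow$(iv)) where the paper runs one six-step cycle (i)$\Rightarrow$(ii)$\Rightarrow\cdots\Rightarrow$(vi)$\Rightarrow$(i), deriving (iv) from the colon-ideal dichotomy (iii) instead.
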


\begin{proof}
$(i)\Rightarrow(ii):$ Assume that $I$ is a $\phi$-1-absorbing ideal of $R$ and
$xy\notin I$ for some nonunit elements $x,y\in R$. It is clear that
$I\cup(\phi(I):xy)\subseteq(I:xy)$. On the other hand, choose $z\in(I:xy)$ and
so $xyz\in I$. If $xyz\not \in \phi(I)$, then $z\in I$. Now suppose
$xyz\in\phi(I)$. Then, $z\in(\phi(I):xy)$. Therefore, it gives
$(I:xy)\subseteq I\cup(\phi(I):xy)$.

$(ii)\Rightarrow(iii):$ Since $(I:xy)=I\cup(\phi(I):xy)$, $(I:xy)$ must be one
of the component in the union.

$(iii)\Rightarrow(iv):$ Assume that $xyJ\subseteq I$ but $xyJ\not \subseteq
\phi(I)$. Let $xy\not \in I$. Then, either $(I:xy)=(\phi(I):xy)$ or $(I:xy)=I$
by $(iii)$. Suppose the former case holds. Since $xyJ\subseteq I$, we have
$J\subseteq(I:xy)=(\phi(I):xy)$. It gives $xyJ\subseteq\phi(I)$ which is a
contradiction. Now, suppose the latter case holds. Then, $J\subseteq(I:xy)=I$
showing $J\subseteq I$, as needed.

$(iv)\Rightarrow(v):$ Let $xJK\subseteq I$ and $xJK\not \subseteq \phi(I)$.
Suppose $xJ\not \subseteq I$ and $K\not \subseteq I$. Then there exists $a\in
J$ such that $xa\not \in I$. Also, since $xJK\not \subseteq \phi(I)$ there
exists $b\in J$ such that $xbK\not \subseteq \phi(I)$. Now assume that
$xaK\not \in \phi(I)$. Since $x,a$ are nonunits and $xaK\subseteq I$, we have
either $xa\in I$ or $K\subseteq I$, a contradiction. So, we get $xaK\in
\phi(I)$. Also, we have $x(a+b)K\subseteq I-\phi(I)$ and it implies $x(a+b)\in
I$. Since $xbK\subseteq I-\phi(I)$ and $K\not \subseteq I$, we get $xb\in I$.
Thus, we obtain $xa\in I$ giving a contradiction. This proves $xJ\subseteq I$
or $K\subseteq I$.

$(v)\Rightarrow(vi):$ Let $JKL\subseteq I$ but $JKL\not \subseteq \phi(I)$ for
some proper ideals $J,K$ and $L$ of $R$. Assume that $JK\not \subseteq I$ and
$L\not \subseteq I$. Then, there exists $y\in J$ such that $yK\not \subseteq
I$. Also since $JKL\nsubseteq\phi(I)$, $xKL\not \in \phi(I)$ for some $x\in
J$. Then, we get $xK\subseteq I$ since $xKL\subseteq I-\phi(I)$. Suppose
$yKL\not \subseteq \phi(I)$. By $(v),$ this gives $yK\subseteq I$ or
$L\subseteq I$, which is contradiction. So, $yKL\subseteq\phi(I)$. As
$(x+y)KL\subseteq I-\phi(I)$, we have $(x+y)K\subseteq I$. This implies
$yK\subseteq I$, a contradiction.

$(vi)\Rightarrow(i):$ Let $xyz\in I-\phi(I)$. Then, $(x)(y)(z)\subseteq I$ and
$(x)(y)(z)\not \subseteq \phi(I)$. Hence, $(x)(y)\subseteq I$ or $(z)\subseteq
I$ showing that $xy\in I$ or $z\in I$, as desired.
\end{proof}

\begin{definition}
Let $I\ $be a $\phi$-1-absorbing prime ideal and $x,y,z\ $be nonunit elements
of $R.\ $If $xyz\in\phi(I),\ xy\notin I$ and $z\notin I,\ $then we say that
$(x,y,z)$ is a\ $\phi$-1-triple zero of $I.\ $
\end{definition}

\begin{remark}
(i)\ Let $I\ $be a $\phi$-1-absorbing prime ideal of $R.\ $Then $I\ $has a
$\ \phi$-1-triple zero if and only if there exists $z\notin I\ $and a nonunit
element $y\in R$ such that $(\phi(I):yz)\nsubseteq(I:y).$

(ii)\ Let $I\ $be a proper ideal of $R.\ $Then $I\ $is a 1-absorbing prime
ideal if and only if the following two conditions must be hold:

\qquad(a) $I\ $is a $\phi$-1-absorbing prime ideal of $R$.

\qquad(b) For each $z\notin I$ and nonunit element $y\in R,\ $we have
$(\phi(I):yz)\subseteq(I:y).$
\end{remark}

\begin{theorem}
\label{I3} Suppose that $I$ is a $\phi$-1-absorbing prime ideal of $R$ that is
not 1-absorbing prime and $(x,y,z)\ $is a $\phi$-1-triple zero of $I$. Then,

(i)\ $xyI\subseteq\phi(I).$\ 

(ii) If $xz,yz\notin I,\ $then $xzI,\ yzI,\ xI^{2},\ yI^{2},\ zI^{2}%
\subseteq\phi(I).\ $In particular, $I^{3}\subseteq\phi(I).\ $
\end{theorem}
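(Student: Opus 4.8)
The plan is to prove (i) first and then bootstrap it repeatedly to obtain all the products in (ii). For (i), fix $i\in I$ and suppose toward a contradiction that $xyi\notin\phi(I)$. The idea is to perturb the triple zero by $i$: since $xyz\in\phi(I)\subseteq I$ and $xyi\in I$, the element $xy(z+i)=xyz+xyi$ lies in $I$, and it cannot lie in $\phi(I)$ (otherwise $xyi=xy(z+i)-xyz\in\phi(I)$). Thus $xy(z+i)\in I-\phi(I)$. I would then split on whether $z+i$ is a unit. If $z+i$ is a unit, multiplying $xy(z+i)\in I$ by $(z+i)^{-1}$ forces $xy\in I$, contradicting $xy\notin I$. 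If $z+i$ is a nonunit, then $x,y,z+i$ are nonunits with $xy(z+i)\in I-\phi(I)$, so the $\phi$-1-absorbing hypothesis gives $xy\in I$ or $z+i\in I$; the former is excluded, and the latter yields $z\in I$, again a contradiction. Hence $xyi\in\phi(I)$ for every $i$, i.e. $xyI\subseteq\phi(I)$.

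For (ii) I would first exploit the extra hypotheses $xz,yz\notin I$ to turn rearrangements of $(x,y,z)$ into $\phi$-1-triple zeros: since $xy\notin I$ forces $x\notin I$ and $y\notin I$, the triples $(x,z,y)$ and $(y,z,x)$ satisfy the definition (for instance $xzy=xyz\in\phi(I)$, $xz\notin I$, $y\notin I$). Applying part (i) to these gives $xzI\subseteq\phi(I)$ and $yzI\subseteq\phi(I)$. To reach $xI^{2}\subseteq\phi(I)$, I would replace $y$ by $y+i_{1}$ for $i_{1}\in I$: using $xyz\in\phi(I)$ and $xzI\subseteq\phi(I)$ one checks $x(y+i_{1})z\in\phi(I)$, while $xy\notin I$ gives $x(y+i_{1})\notin I$ and $z\notin I$, so $(x,y+i_{1},z)$ is again a triple zero (the case $y+i_{1}$ a unit is excluded, since it would force $xz\in\phi(I)\subseteq I$). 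Part (i) then yields $x(y+i_{1})I\subseteq\phi(I)$, and subtracting $xyI\subseteq\phi(I)$ gives $xi_{1}I\subseteq\phi(I)$, that is $xI^{2}\subseteq\phi(I)$. The symmetric argument gives $yI^{2}\subseteq\phi(I)$, and perturbing $x$ in the triple zero $(z,x,y)$ by $i_{1}$ gives $zI^{2}\subseteq\phi(I)$ in the same way.

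Finally, for $I^{3}\subseteq\phi(I)$ I would perturb all three coordinates at once. Given $i_{1},i_{2},i_{3}\in I$, expand $(x+i_{1})(y+i_{2})(z+i_{3})$ into eight terms; the seven mixed terms each lie in $\phi(I)$ by the products already established ($xyI$, $xzI$, $yzI$, $xI^{2}$, $yI^{2}$, $zI^{2}$ together with $xyz\in\phi(I)$), so the product is congruent to $i_{1}i_{2}i_{3}$ modulo $\phi(I)$ and lies in $I$. If $i_{1}i_{2}i_{3}\notin\phi(I)$, then $(x+i_{1})(y+i_{2})(z+i_{3})\in I-\phi(I)$, and applying the $\phi$-1-absorbing property (again disposing of the unit cases for each factor, which force $yz$, $xz$, or $xy$ into $I$ respectively) produces $(x+i_{1})(y+i_{2})\in I$ or $z+i_{3}\in I$, each contradicting one of $xy\notin I$ or $z\notin I$. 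Hence $i_{1}i_{2}i_{3}\in\phi(I)$ and $I^{3}\subseteq\phi(I)$. The recurring obstacle in every step is precisely the possibility that a perturbed element $z+i$, $y+i_{1}$, or $x+i_{1}$ becomes a unit, so that the nonunit-only $\phi$-1-absorbing hypothesis cannot be invoked directly; the observation that resolves this is that a unit factor can simply be cancelled, pushing the corresponding product straight into $I$ (or $\phi(I)$) and contradicting one of the ``$\notin I$'' conditions furnished by the triple zero and by the hypotheses $xz,yz\notin I$ of (ii).
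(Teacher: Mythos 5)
Your proof is correct and uses essentially the same argument as the paper: perturb the entries of the $\phi$-1-triple zero by elements of $I$, dispose of the unit cases by cancellation (which pushes $xy$, $xz$, $yz$ or $z$ into $I$, contradicting the hypotheses), and invoke the $\phi$-1-absorbing property to reach a contradiction. The only difference is organizational — you obtain $xzI$, $yzI$, $xI^{2}$, $yI^{2}$, $zI^{2}$ by recognizing permuted and perturbed triples such as $(x,z,y)$ and $(x,y+i_{1},z)$ as new $\phi$-1-triple zeros and reapplying part (i), whereas the paper reruns the direct contradiction argument for each containment; the elements actually examined (e.g.\ $x(y+a)z$, $x(y+a)(z+b)$, $(x+a)(y+b)(z+c)$) coincide in both treatments.
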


\begin{proof}
(i):\ Let $I$ be a $\phi$-1-absorbing prime ideal of $R$ that is not
1-absorbing prime and $(x,y,z)\ $be a $\phi$-1-triple zero of $I$. Then we
have $xyz\in\phi(I),\ xy\notin I\ $and $z\notin I.\ $Suppose
$xyI\not \subseteq \phi(I)$. Then, there exists $a\in I$ such that
$xya\not \in \phi(I)$. So, $xy(z+a)\not \in I-\phi(I)$. If $z+a$ is unit, then
$xy\in I$,\ a contradiction. Now assume that $z+a$ is nonunit and so we get
$xy\in I$ or $z\in I,\ $again a contradiction. Thus, we have $xyI\subseteq
\phi(I)$.

(ii):\ Now, assume that $xz,yz\notin I.\ $We will show that
$xzI,\ yzI\subseteq\phi(I).\ $Suppose that $xzI\nsubseteq\phi(I).\ $Then there
exists an element $a\in I\ $such that $xza\notin\phi(I).\ $This implies that
$x(y+a)z\in I-\phi(I).\ $If $y+a$ is unit, then $xz\in I\ $which is a
contradiction. Thus $y+a\ $is nonunit. Since $I\ $is a $\phi$-1-absorbing
prime ideal,\ we conclude either $x(y+a)\in I\ $or $z\in I,\ $which implies
$xy\in I\ $or $z\in I,\ $again a contradiction. Thus, $xzI\subseteq\phi
(I).\ $By using similar argument, we have $yzI\subseteq\phi(I)$. Now, we will
show that $xI^{2}\subseteq\phi(I)$. Suppose to the contrary. Then, there
exists $a,b\in I$ such that $xab\not \in \phi(I)$. It implies $x(y+a)(z+b)\in
I-\phi(I)$. If $(y+a)$ is unit, $x(z+b)\in I$ which gives $xz\in I,\ $a
contradiction. Similarly,$\ (z+b)$ is nonunit.\ Then, either $x(y+a)\in I$ or
$z+b\in I$ implying that $xy\in I$ or $z\in I$. Thus, we have $xI^{2}%
\subseteq\phi(I)$. Similarly, we get $yI^{2}\subseteq\phi(I)$ and
$zI^{2}\subseteq\phi(I)$, we are done. For the rest, if $I^{3}\not \subseteq
\phi(I)$, there exists $a,b,c\in I$ such that $abc\not \in \phi(I)$. Then,
$(x+a)(y+b)(z+c)\in I-\phi(I)$. If $x+a\ $is unit, then we obtain
$(y+b)(z+c)=yz+yc+zb+bc\in I\ $and so $yz\in I,\ $which is a contradiction.
Similarly, we can show that $y+b\ $and $z+c\ $are nonunits.\ Then, we get
$(x+a)(y+b)\in I$ or $z+c\in I$. This gives $xy\in I$ or $z\in I$, again a
contradiction. Hence, $I^{3}\subseteq\phi(I).$
\end{proof}

\begin{theorem}
Let $R$ be a ring and $a$ be a nonunit element of $R$. Suppose that
$(0:a)\subseteq(a)$\ (e.g., $a$ is regular). Then, $(a)$ is $\phi$-1-absorbing
prime ideal with $\phi\leq\phi_{2}$ if and only if $(a)$ is a 1-absorbing
prime ideal.
\end{theorem}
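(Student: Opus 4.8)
The plan is to prove the two implications separately. The implication from $1$-absorbing prime to $\phi$-$1$-absorbing prime is immediate and in fact needs no hypothesis on $a$: a $1$-absorbing prime ideal is precisely a $\phi_{\emptyset}$-$1$-absorbing prime ideal, and $\phi_{\emptyset}\leq\phi$ for every choice of $\phi$ (since $\emptyset\subseteq\phi(I)$ for all $I$). Hence Proposition \ref{pfirst}(i) delivers the conclusion for all $\phi$, in particular for any $\phi\leq\phi_{2}$.

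The substantive direction is the forward one, and I would argue it by contradiction. Assume $(a)$ is $\phi$-$1$-absorbing prime with $\phi\leq\phi_{2}$ but is \emph{not} $1$-absorbing prime. Then there exist nonunits $x,y,z\in R$ with $xyz\in(a)$, $xy\notin(a)$, and $z\notin(a)$. Since $(a)$ is $\phi$-$1$-absorbing prime, the product $xyz$ cannot lie in $(a)-\phi((a))$, for otherwise we would already obtain $xy\in(a)$ or $z\in(a)$; thus $xyz\in\phi((a))$, and $(x,y,z)$ is a $\phi$-$1$-triple zero of $(a)$. This is exactly the setup required to apply Theorem \ref{I3}(i), which yields $xy(a)\subseteq\phi((a))$.

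Now I would bring in the two standing hypotheses. Because $\phi\leq\phi_{2}$ we have $\phi((a))\subseteq(a)^{2}=(a^{2})$, so in particular $xya\in(a^{2})$; write $xya=a^{2}v$ for some $v\in R$. Then $a(xy-av)=0$, whence $xy-av\in(0:a)$. The hypothesis $(0:a)\subseteq(a)$ forces $xy-av\in(a)$, and since $av\in(a)$ we conclude $xy\in(a)$, contradicting the choice of the triple zero. Therefore no $\phi$-$1$-triple zero can exist, and $(a)$ is $1$-absorbing prime.

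The only real obstacle is recognizing that Theorem \ref{I3}(i) already supplies the crucial containment $xy(a)\subseteq\phi((a))$; once that is in hand the remainder is a one-line cancellation argument. The precise role of $(0:a)\subseteq(a)$ is to cancel the factor $a$ from $xya\in(a^{2})$ and recover $xy\in(a)$, with the parenthetical case ``$a$ regular'' being simply the instance $(0:a)=0$.
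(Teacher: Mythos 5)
Your proof is correct, and it reaches the crux of the argument --- the containment $xya\in(a)^{2}$ followed by cancellation of $a$ via $(0:a)\subseteq(a)$ --- by a slightly different route than the paper. The paper argues directly rather than by contradiction: given nonunits $x,y,z$ with $xyz\in(a)$, it disposes of the case $xyz\notin(a)^{2}$ immediately, then perturbs to $xy(z+a)\in(a)$ and case-splits on whether this product lies in $(a)^{2}$, the last case yielding $xya\in(a)^{2}$ by subtraction. You instead observe that a failure of $1$-absorbing primeness produces a $\phi$-$1$-triple zero and invoke Theorem \ref{I3}(i) to get $xy(a)\subseteq\phi((a))\subseteq(a)^{2}$ wholesale; this reuses the perturbation argument already carried out in the proof of Theorem \ref{I3} instead of repeating it inline, which is arguably cleaner. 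Both endings are the same one-line cancellation. One incidental benefit of your version: the paper's printed final step reads ``$ayz\in(a)^{2}$ \ldots\ therefore $yz\in(a)$,'' which is a slip (the subtraction gives $axy\in(a)^{2}$ and the needed conclusion is $xy\in(a)$, since $yz\in(a)$ is not one of the alternatives defining a $1$-absorbing prime ideal); your derivation of $xy\in(a)$ is the corrected form of that step. Your treatment of the easy direction, via $\phi_{\emptyset}\leq\phi$ and Proposition \ref{pfirst}(i), matches the paper's.
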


\begin{proof}
If $(a)$ is 1-absorbing prime ideal, then $(a)$ is $\phi$-1-absorbing ideal.
For the other direction, assume that $(a)$ is $\phi$-1-absorbing prime ideal
with $\phi\leq\phi_{2}$. Then, it is also $\phi_{2}$-1-absorbing prime ideal
by Proposition \ref{pfirst}. Let $xyz\in(a)\ $for some nonunits $x,y,z\in R$.
If $xyz\not \in (a)^{2}$, then $xy\in(a)$ or $z\in(a)$. So suppose
$xyz\in(a)^{2}$. We have $xy(z+a)\in(a)$. If $z+a$ is unit, we are done.
Hence, we can assume $z+a$ is nonunit. Assume that $xy(z+a)\not \in (a)^{2}$.
Then we get either $xy\in(a)$ or $z+a\in(a)\ $implying $xy\in(a)$ or $z\in
(a)$. Now assume $xy(z+a)\in(a)^{2}$. This gives $ayz\in(a)^{2}$ and so there
exists $t\in R\ $such that $ayz=a^{2}t$. Thus we have $yz-at\in(0:a)\subseteq
(a).$ Therefore, $yz\in(a)+(0:a)\subseteq(a)$, as needed.
\end{proof}

Now, we give a condition for a $\phi$-1-absorbing prime ideal of $R\ $to
become a $\phi$-prime ideal of $R$.

\begin{theorem}
\label{tp1-abs}Let $I\ $be a proper ideal of a non-quasi local ring\ $R.\ $%
Suppose that $(\phi(I):x)$ is not maximal ideal for each $x\in I.\ $The
following statements are equivalent.

(i)\ $I\ $is a $\phi$-prime ideal of $R.$

(ii)\ $I\ $is a $\phi$-1-absorbing prime ideal of $R.\ $
\end{theorem}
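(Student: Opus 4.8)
The plan is to prove the two implications separately. The implication $(i)\Rightarrow(ii)$ is immediate, since every $\phi$-prime ideal is $\phi$-1-absorbing prime by Proposition \ref{pfirst}(iv); thus all the content lies in $(ii)\Rightarrow(i)$. So suppose $I$ is $\phi$-1-absorbing prime, take $x,y\in R$ with $xy\in I-\phi(I)$, and aim to show $x\in I$ or $y\in I$. If one of $x,y$ is a unit, say $x$, then $y=x^{-1}(xy)\in I$ and we are done; hence I may assume $x,y$ are nonunits and, arguing by contradiction, that $x\notin I$ and $y\notin I$.

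The engine of the argument is the following observation, valid for every nonunit $w$ with $w\notin(\phi(I):xy)$: since then $xyw\in I-\phi(I)$ with $x,y,w$ nonunits, the $\phi$-1-absorbing property together with $x\notin I$ and $y\notin I$ forces both $xw\in I$ and $yw\in I$. Next I invoke that $R$ is non-quasi-local, so its nonunits are not closed under addition; rescaling a pair of nonunits whose sum is a unit, I obtain nonunits $u,v$ with $u+v=1$. Because $xy\notin\phi(I)$ and $xy=xyu+xyv$, at least one of $xyu,xyv$ lies outside $\phi(I)$, and after relabelling I may assume $xyu\notin\phi(I)$, i.e. $u\notin(\phi(I):xy)$. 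The observation then yields $xu\in I$.

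It now suffices to prove $xv\in I$, for then $x=x(u+v)=xu+xv\in I$, the desired contradiction. Set $P=(\phi(I):xy)$, a proper ideal (because $xy\notin\phi(I)$) which, since $xy\in I$, is not maximal by hypothesis. If $v\notin P$, the observation applies directly to $w=v$ and gives $xv\in I$. The only remaining case is $v\in P$, and this is where the two hypotheses must be combined: choose a maximal ideal $M\supseteq P$; as $P$ is not maximal we have $P\subsetneq M$, so there exists $t\in M\setminus P$. Then $t$ and $v+t$ are both nonunits (each lies in $M$) that avoid $P$ (since $t\notin P$ while $v\in P$), so the observation gives $xt\in I$ and $x(v+t)\in I$, whence $xv=x(v+t)-xt\in I$, finishing the contradiction.

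I expect the last step, the case $v\in P$, to be the main obstacle, because that is exactly where the non-maximality of $(\phi(I):xy)$ has to be converted into a usable nonunit $t\in M\setminus P$. Replacing $v$ by $v+t$ is the key trick: it lets the $\phi$-1-absorbing hypothesis detect the product $xv$ even though $v$ itself sits inside $(\phi(I):xy)$, where the basic observation cannot be applied.
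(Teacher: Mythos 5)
Your proof is correct and follows essentially the same route as the paper's: both decompose $1$ into a sum of nonunits, apply the $\phi$-1-absorbing hypothesis to triples $x\cdot w\cdot y$ with $w$ a nonunit outside $(\phi(I):xy)$, and use the non-maximality of $(\phi(I):xy)$ to perturb any piece that lands inside it before recombining linearly to get $x\in I$. Your packaging via $1=u+v$ and the shift $v\mapsto v+t$ with $t\in M\setminus P$ is a cleaner rendering of the paper's construction of $z\in\mathfrak{m}_{2}\setminus\mathfrak{m}_{1}$, $1+az\in\mathfrak{m}_{1}$ and $b\in\mathfrak{m}_{1}\setminus(\phi(I):xy)$, but the underlying mechanism is identical.
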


\begin{proof}
$(i)\Rightarrow(ii):\ $Follows from Proposition \ref{pfirst}.

$(ii)\Rightarrow(i):\ $Let $I\ $be a $\phi$-1-absorbing prime ideal of
$R.\ $Choose $x,y\in R\ $such that $xy\in I-\phi(I).\ $If $x$ or $y$ is unit,
then $x\in I\ $or $y\in I\ $which is needed. So suppose that $x,y\ $are
nonunits in $R.\ $Since $xy\notin\phi(I),\ (\phi(I):xy)$ is proper. Choose a
maximal ideal $\mathfrak{m}_{1}$ of $R\ $containing $\ (\phi(I):xy)\subseteq
\mathfrak{m}_{1}.\ $Since $R\ $is non-quasi-local, there exists a different
maximal ideal $\mathfrak{m}_{2}\ $of $R.\ $Now, take $z\in\mathfrak{m}%
_{2}-\mathfrak{m}_{1}.\ $Then $z\notin\ (\phi(I):xy),\ $and so we have
$(zx)y\in I-\phi(I).\ $Since $I\ $is a $\phi$-1-absorbing prime ideal of
$R,\ $we get either $zx\in I\ $or $y\in I.\ $If $y\in I,\ $then we are done.
So assume that $zx\in I.\ $As $z\notin\mathfrak{m}_{1},\ $then there exists an
$a\in R\ $such that $1+az\in\mathfrak{m}_{1}.\ $Note that $1+az$ is nonunit.
If $1+az\notin(\phi(I):xy),\ $then we have $(1+az)xy\in I-\phi(I)$ implying
$(1+az)x\in I\ $and so $x\in I\ $since $zx\in I.\ $So assume that
$1+az\in(\phi(I):xy),\ $that is, $xy(1+az)\in\phi(I).\ $Now, choose an element
$b\in\mathfrak{m}_{1}-(\phi(I):xy).\ $Then we have $(1+az+b)xy\in
I-\phi(I).\ $On the other hand, since $1+az+b\in\mathfrak{m}_{1},\ 1+az+b$ is
nonunit. This implies that $(1+az+b)x\in I.\ $Also, since $bxy\in
I-\phi(I),\ $we get $bx\in I.\ $Then we have $x=(1+az+b)x-a(zx)-bx\in
I.\ $Therefore, $I\ $is a $\phi$-prime ideal of $R.$
\end{proof}

Now, for any ideal $J$ of $R$ define a function $\phi_{J}:\mathcal{I}%
(R/J)\rightarrow\mathcal{I}(R/J)\cup\{\emptyset\}$ by $\phi_{J}(I/J)=(\phi
(I)+J))/J$ where $J\subseteq I$ and $\phi_{J}(I/J)=\emptyset$ if
$\phi(I)=\emptyset$. Also, note that $\phi_{J}(I/J)\subseteq I/J$.

\begin{theorem}
\label{tfac}(i)\ Let $I$ be a $\phi$-1-absorbing prime ideal of $R.\ $Then
$I/\phi(I)\ $is a weakly 1-absorbing prime ideal of $R/\phi(I).\ $

(ii) Let $I/\phi(I)\ $be a weakly 1-absorbing prime ideal of $R/\phi(I)$\ and
$u(R/\phi(I))=\{x+\phi(I):x\in u(R)\}.$ Then $I\ $is a $\phi$-1-absorbing
prime ideal of $R.$

(iii)\ Let $I,J$ be two ideals of $R$ with $J\subseteq I$ and $I$ be a $\phi
$-1-absorbing prime ideal. Then, $I/J$ is a $\phi_{J}$-1-absorbing prime ideal
of $R/J$.
\end{theorem}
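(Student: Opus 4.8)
The plan is to handle all three parts as correspondence-theorem arguments: in each case I translate the relevant ``$1$-absorbing'' condition across the canonical surjection $R \to R/\phi(I)$ (or $R \to R/J$), then invoke the hypothesis on $I$ (or on $I/\phi(I)$) and push the conclusion back. The one fact worth isolating at the outset concerns units: for any ideal $K \subseteq R$ and $x \in R$, if $x$ is a unit of $R$ then $x + K$ is a unit of $R/K$; equivalently, a nonunit coset $x + K$ always lifts to a nonunit $x \in R$. The converse --- that a nonunit of $R$ remains a nonunit modulo $K$ --- can fail, and this is exactly the point where part (ii) needs its extra hypothesis.

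For (i), write $\overline{x} = x + \phi(I)$ for the reduction map and take nonunits $\overline{x}, \overline{y}, \overline{z}$ of $R/\phi(I)$ with $\overline{0} \neq \overline{x}\,\overline{y}\,\overline{z} \in I/\phi(I)$. Since the zero of $R/\phi(I)$ is the coset $\phi(I)$ and $\phi(I) \subseteq I$, the two conditions unwind to $xyz \in I - \phi(I)$, while each nonunit coset forces $x, y, z$ nonunit in $R$ by the observation above. As $I$ is $\phi$-1-absorbing prime, $xy \in I$ or $z \in I$, and these push forward to $\overline{x}\,\overline{y} \in I/\phi(I)$ or $\overline{z} \in I/\phi(I)$, which is the weakly $1$-absorbing conclusion. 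Part (iii) runs on the identical template with $J$ replacing $\phi(I)$: from $\overline{x}\,\overline{y}\,\overline{z} \in I/J$ one gets $xyz \in I$ (using $J \subseteq I$), and from $\overline{x}\,\overline{y}\,\overline{z} \notin \phi_J(I/J) = (\phi(I)+J)/J$ one gets $xyz \notin \phi(I)$ (since $\phi(I) \subseteq \phi(I)+J$); hence $xyz \in I - \phi(I)$ with $x,y,z$ nonunit, and the $\phi$-1-absorbing property of $I$ again delivers $xy \in I$ or $z \in I$, which descend to $R/J$.

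For (ii) I reverse the translation. Starting from nonunits $x, y, z \in R$ with $xyz \in I - \phi(I)$, the hypothesis $u(R/\phi(I)) = \{x + \phi(I) : x \in u(R)\}$ is precisely what certifies that $\overline{x}, \overline{y}, \overline{z}$ stay nonunits in $R/\phi(I)$. Then $\overline{0} \neq \overline{x}\,\overline{y}\,\overline{z} \in I/\phi(I)$ (nonzero because $xyz \notin \phi(I)$), so the weakly $1$-absorbing hypothesis on $I/\phi(I)$ yields $\overline{x}\,\overline{y} \in I/\phi(I)$ or $\overline{z} \in I/\phi(I)$; lifting back and using $\phi(I) \subseteq I$ gives $xy \in I$ or $z \in I$.

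The only genuine obstacle, and the step I would watch, is the unit bookkeeping in (ii). In (i) and (iii) the needed direction (a nonunit coset comes from a nonunit) is automatic, but in (ii) I must guarantee that nonunits of $R$ are not accidentally inverted in the quotient; this is false for arbitrary quotients and is supplied verbatim by the stated condition on $u(R/\phi(I))$. Apart from that, the whole theorem is a direct unwinding of the definitions of weakly $1$-absorbing prime, $\phi$-1-absorbing prime, and $\phi_J$.
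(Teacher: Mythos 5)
Your proof is correct and follows essentially the same route as the paper's: unwind the definitions across the quotient map for (i) and (iii) (the paper simply declares (iii) ``similar to (i)'', which you spell out, correctly using $\phi(I)\subseteq\phi(I)+J$), and invoke the hypothesis $u(R/\phi(I))=\{x+\phi(I):x\in u(R)\}$ in (ii) exactly where the paper does, to keep $\overline{x},\overline{y},\overline{z}$ nonunits. Your closing remark correctly identifies the unit bookkeeping in (ii) as the only nontrivial point, which is also the paper's reading.
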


\begin{proof}
(i):\ Let $\overline{0}\neq\Bar{x}\Bar{y}\Bar{z}\in I/\phi(I)$for some
nonunits $\Bar{x},\Bar{y},\Bar{z}\in R/\phi(I),\ $where $\overline{x}%
=x+\phi(I),\overline{y}=y+\phi(I)$ and $\overline{z}=z+\phi(I)$. Then
$x,y,z\ $are nonunits in $R\ $and $xyz\in I-\phi(I).$ Since $I$ is a $\phi
$-1-absorbing prime ideal of $R$, $xy\in I$ or $z\in I$. Then, we get
$\overline{x}\overline{y}\in I/J$ or $\Bar{z}\in I/J$ which completes the proof.

(ii): Let $I/\phi(I)\ $be a weakly 1-absorbing prime ideal of $R/\phi(I)$\ and
$u(R/\phi(I))=\{x+\phi(I):x\in u(R)\}.\ $Choose nonunits $x,y,z$ in $R\ $such
that $xyz\in I-\phi(I).\ $Then we have $\overline{0}\neq\Bar{x}\Bar{y}\Bar
{z}\in I/\phi(I).\ $Since $u(R/\phi(I))=\{x+\phi(I):x\in u(R)\},\ \Bar{x}%
,\Bar{y}\ $and $\Bar{z}\ $are nonunits in $R/\phi(I).\ $Since $I/\phi(I)\ $is
a weakly 1-absorbing prime ideal, we have either $\Bar{x}\Bar{y}\in I/\phi(I)$
or $\overline{z}\in I/\phi(I)$, which implies $xy\in I\ $or $z\in
I.\ $Therefore, $I\ $is a $\phi$-1-absorbing prime ideal of $R.$

(iii):\ It is similar to (i)
\end{proof}

Let $R$ be a commutative ring and $S$ be a multiplicatively closed subset of
$R$. Consider the function $\phi:\mathcal{I}(R)\rightarrow\mathcal{I}%
(R)\cup\{\emptyset\}$. Define $\phi_{S}:\mathcal{I}(S^{-1}R)\rightarrow
\mathcal{I}(S^{-1}R)\cup\{\emptyset\}$ by $\phi_{S}(I)=S^{-1}\phi(I\cap R)$
and $\phi_{S}(I)=\emptyset$ if $\phi(I\cap R)=\emptyset$. Here, it is easy to
see that $\phi_{S}(I)\subseteq I$.

\begin{theorem}
\label{loc}Let $R$ be a commutative ring, $\phi:\mathcal{I}(R)\rightarrow
\mathcal{I}(R)\cup\{\emptyset\}$ be a function, $I$ be a $\phi$-1-absorbing
prime ideal of $R$ and $S$ be a multiplicatively closed subset of $R$ with
$I\cap S=\emptyset$ and $S^{-1}\phi(I)\subseteq\phi_{S}(S^{-1}I)$. Then,
$S^{-1}I$ is a $\phi_{S}$-1-absorbing prime ideal of $S^{-1}R$. Furthermore,
if $S^{-1}I\neq S^{-1}\phi(I),$ then $S^{-1}I\cap R=I$.
\end{theorem}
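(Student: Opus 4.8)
The plan is to verify the $\phi_S$-$1$-absorbing prime condition directly for $S^{-1}I$, and then treat the contraction statement separately. First I would take nonunits $\frac{x}{s}, \frac{y}{t}, \frac{z}{u} \in S^{-1}R$ with
\[
\frac{x}{s}\cdot\frac{y}{t}\cdot\frac{z}{u} \in S^{-1}I - \phi_S(S^{-1}I).
\]
Since each of these is a nonunit in $S^{-1}R$, the numerators $x,y,z$ must be nonunits in $R$ (a unit numerator would make the fraction a unit). Writing $\frac{xyz}{stu} \in S^{-1}I$ means there is some $v \in S$ with $vxyz \in I$. The key observation is that, because $v \in S$ is a nonunit candidate we cannot simply absorb it, so I would instead work with the element $vxyz$ and use that $v$ times a nonunit is still a nonunit times the relevant product — more precisely, I would arrange the factorization $(vx)(y)(z) \in I$ where $vx, y, z$ are nonunits in $R$, so that the $\phi$-$1$-absorbing hypothesis on $I$ applies once I check the image does not land in $\phi(I)$.

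The step that controls everything is showing that the product does \emph{not} lie in $\phi(I)$, i.e. transferring $\frac{xyz}{stu} \notin \phi_S(S^{-1}I)$ back to $vxyz \notin \phi(I)$ in $R$. Here is where the extra hypothesis $S^{-1}\phi(I) \subseteq \phi_S(S^{-1}I)$ is used: if we had $vxyz \in \phi(I)$ for the relevant $v$, then $\frac{xyz}{stu} \in S^{-1}\phi(I) \subseteq \phi_S(S^{-1}I)$, contradicting the assumption. So I would argue by contradiction that $(vx)yz \notin \phi(I)$, then apply the $\phi$-$1$-absorbing primeness of $I$ to conclude $(vx)y \in I$ or $z \in I$. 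In the first case $\frac{xy}{st} = \frac{vxy}{vst} \in S^{-1}I$; in the second case $\frac{z}{u} \in S^{-1}I$. Either way the required conclusion holds. The main obstacle I anticipate is the bookkeeping around the element $v \in S$: ensuring $vx$ (or whichever numerator absorbs $v$) is genuinely a nonunit in $R$ and that the regrouping $(vx)(y)(z)$ is legitimate, since the $1$-absorbing structure is sensitive to which two factors get grouped. If $vx$ happened to be a unit one would need a short separate argument, but the hypothesis that $\frac{x}{s}$ is a nonunit should rule this out after clearing through $S$.

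For the final assertion, suppose $S^{-1}I \neq S^{-1}\phi(I)$ and I would show $S^{-1}I \cap R = I$. The inclusion $I \subseteq S^{-1}I \cap R$ is immediate. For the reverse, take $a \in S^{-1}I \cap R$, so $a = \frac{c}{1}$ with $c \in R$ and $\frac{c}{1} \in S^{-1}I$, giving $wc \in I$ for some $w \in S$. Since $S^{-1}I \neq S^{-1}\phi(I)$, I can fix a witness $\frac{m}{s} \in S^{-1}I \setminus S^{-1}\phi(I)$; pulling this through $S$ produces nonunits whose product sits in $I - \phi(I)$, and combining such a witness multiplicatively with $wc$ — together with the disjointness $I \cap S = \emptyset$ forcing the $S$-components to be nonunits — lets me invoke the $\phi$-$1$-absorbing property of $I$ to peel off the factor from $S$ and land $c \in I$. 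The delicate point is again guaranteeing nonunit status of the constructed factors and that the product avoids $\phi(I)$, which is exactly what the hypothesis $S^{-1}I \neq S^{-1}\phi(I)$ secures.
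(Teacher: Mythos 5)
Your argument for the first assertion is essentially the paper's: clear denominators to get $v\in S$ with $(vx)yz\in I$, note that $x,y,z$ (and hence $vx$) are nonunits in $R$ because a unit numerator would make the fraction a unit in $S^{-1}R$, rule out $(vx)yz\in\phi(I)$ via the hypothesis $S^{-1}\phi(I)\subseteq\phi_S(S^{-1}I)$, and apply the $\phi$-$1$-absorbing property of $I$. That part is correct and matches the paper.

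The gap is in the contraction statement. For $a\in S^{-1}I\cap R$ with $sa\in I$, $s\in S$, the only way to use the hypothesis on $I$ is to feed it a triple of nonunits, and the natural triple is $(s,s,a)$: if $s^{2}a\notin\phi(I)$, then $s\cdot s\cdot a\in I-\phi(I)$ gives $s^{2}\in I$ or $a\in I$, and $I\cap S=\emptyset$ kills the first option. But nothing prevents $s^{2}a\in\phi(I)$ for this particular $a$, and in that case the $\phi$-$1$-absorbing property says nothing. Your plan to ``combine $a$ multiplicatively with a witness $\frac{m}{s}\in S^{-1}I\setminus S^{-1}\phi(I)$'' is not an argument: the hypothesis $S^{-1}I\neq S^{-1}\phi(I)$ is a global statement about two ideals and does not secure, element by element, that the relevant product avoids $\phi(I)$. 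The paper's resolution is different: when $s^{2}a\in\phi(I)$ one instead records $a\in S^{-1}\phi(I)\cap R$, so that altogether $S^{-1}I\cap R\subseteq I\cup\bigl(S^{-1}\phi(I)\cap R\bigr)$; since an ideal contained in the union of two ideals must equal one of them, either $S^{-1}I\cap R=I$ or $S^{-1}I\cap R=S^{-1}\phi(I)\cap R$, and extending the latter back to $S^{-1}R$ would force $S^{-1}I=S^{-1}\phi(I)$, which is excluded by hypothesis. You need this (or an equivalent) covering argument; the witness idea as stated does not close the case $s^{2}a\in\phi(I)$.
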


\begin{proof}
Let $\frac{x}{s}\frac{y}{t}\frac{z}{u}\in S^{-1}I-\phi_{S}(S^{-1}I)$ for some
nonunits $\frac{x}{s},\frac{y}{t},\frac{z}{u}\in S^{-1}R$. Then, there exists
$s^{\prime}\in S$ such that $s^{\prime}xyz\in I$ but $s^{\ast}xyz\not \in
\phi(S^{-1}I\cap R)$ for all $s^{\ast}\in S$. If $s^{\prime}xyz\in\phi(I),$
then we have $\frac{x}{s}\frac{y}{t}\frac{z}{t}\in\phi(I)_{S}\subseteq\phi
_{S}(S^{-1}I)$, a contradiction. So we get $s^{\prime}xyz=(s^{\prime}x)yz\in
I-\phi(I)$. Since $s^{\prime}x,y,z$ are nonunits in $R$ and $I$ is a $\phi
$-1-absorbing prime ideal, we get $s^{\prime}xy\in I$ or $z\in I$. This
implies $\frac{x}{s}\frac{y}{t}=\frac{s^{\prime}xy}{s^{\prime}st}\in S^{-1}I$
or $\frac{z}{u}\in S^{-1}I$.

Now we will show that $S^{-1}I\cap R=I$. Let $a\in S^{-1}I$. Then, there
exists $s\in S$ such that $sa\in I$. If $s$ is unit, we are done. If $a$ is
unit, it contrdicts with $I\cap S=\emptyset$. So we can assume $s$ and $a$ are
nonunits in $R$. If $s^{2}a=ssa\not \in \phi(I)$, we get $s^{2}\in I$ or $a\in
I$. Since former case is not possible, we have $a\in I$. In the case
$s^{2}a\in\phi(I)$, we have $a\in S^{-1}\phi(I)\cap R$. So we obtain
$S^{-1}I\cap R\subseteq I\cup\left(  S^{-1}\phi(I)\cap R\right)  $. Thus, we
conclude that either $S^{-1}I\cap R=I$ or $S^{-1}I\cap R=S^{-1}\phi(I)\cap R$.
Since latter case contradicts with the assumption, we have $S^{-1}I\cap R=I$.
\end{proof}

Let $R_{1},R_{2}$ be commutative rings and $\phi_{1}:\mathcal{I}%
(R_{1})\rightarrow\mathcal{I}(R_{1})\cup\{\emptyset\}$, $\phi_{2}%
:\mathcal{I}(R_{2})\rightarrow\mathcal{I}(R_{2})\cup\{\emptyset\}$ be two
functions. Suppose that $R=R_{1}\times R_{2}\ $and $\phi:\mathcal{I}%
(R)\rightarrow\mathcal{I}(R)\cup\{\emptyset\}$ is a function defined by
$\phi(I_{1}\times I_{2})=\phi_{1}(I_{1})\times\phi_{2}(I_{2})\ $for each ideal
$I_{k}\ $of $R_{k}.\ $Then $\phi$ is denoted by $\phi=\phi_{1}\times\phi_{2}.$

\begin{theorem}
\label{tm1}Let $R_{1},R_{2}$ be commutative rings and $\phi_{1}:\mathcal{I}%
(R_{1})\rightarrow\mathcal{I}(R_{1})\cup\{\emptyset\}$, $\phi_{2}%
:\mathcal{I}(R_{2})\rightarrow\mathcal{I}(R_{2})\cup\{\emptyset\}$ be two
functions. Suppose that $I=I_{1}\times I_{2},\ $where $I_{i}\ $is an ideal of
$R_{i}$ for each $i=1,2,\ $and $\phi=\phi_{1}\times\phi_{2}.\ $If
$I=I_{1}\times I_{2}\ $is a $\phi$-1-absorbing prime ideal of $R,$ then one of
the following three conditions must be hold.

(i) $\phi(I)=I.$

(ii)$\ I=I_{1}\times R_{2}$ and $I_{1}\ $is a $\phi_{1}$-prime ideal of
$R_{1}$ which must be prime if $\phi_{2}(R_{2})\ $is not unique maximal ideal
of $R_{2}$ (e.g. $R_{1},R_{2}\ $are not quasi-local)$.$

(iii)\ $I=R_{1}\times I_{2}$ and $I_{2}\ $is a $\phi_{2}$-prime ideal of
$R_{2}$ which must be prime if $\phi_{1}(R_{1})\ $is not unique maximal ideal
of $R_{1}$ (e.g. $R_{1},R_{2}\ $are not quasi-local)$.$
\end{theorem}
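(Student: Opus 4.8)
My plan is to prove the trichotomy by first establishing a dichotomy and then refining it. Assuming $I=I_1\times I_2$ is $\phi$-1-absorbing prime, I will show that if case (i) fails (that is, $\phi(I)\neq I$), then exactly one of $I_1,I_2$ equals the whole ring. Since $I$ is proper, at most one of $I_1=R_1$, $I_2=R_2$ can hold, so the real content is to rule out both $I_1,I_2$ being proper. First I would assume $I_1\subsetneq R_1$ and $I_2\subsetneq R_2$ and derive $\phi(I)=I$. Note that $(1,0)$ and $(0,1)$ are nonunits of $R$ (as $R_1,R_2$ are nonzero), and for any $a\in I_1$ the element $(a,1)$ is a nonunit because $a$, lying in the proper ideal $I_1$, is a nonunit. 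Taking the nonunit triple $x=(1,0)$, $y=(1,0)$, $z=(a,1)$ gives $xyz=(a,0)\in I$, while $xy=(1,0)\notin I$ (since $1\notin I_1$) and $z=(a,1)\notin I$ (since $1\notin I_2$). The $\phi$-1-absorbing property then forces $xyz\in\phi(I)$, i.e. $a\in\phi_1(I_1)$. As $a\in I_1$ was arbitrary, $I_1=\phi_1(I_1)$; the symmetric triple $(0,1),(0,1),(1,b)$ with $b\in I_2$ gives $I_2=\phi_2(I_2)$. Hence $\phi(I)=\phi_1(I_1)\times\phi_2(I_2)=I$, the desired contradiction. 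By symmetry it then suffices to treat case (ii), $I=I_1\times R_2$ with $I_1$ proper.

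Next I would show $I_1$ is $\phi_1$-prime. Given $a,b\in R_1$ with $ab\in I_1-\phi_1(I_1)$, if either $a$ or $b$ is a unit then one of them immediately lies in $I_1$, so I may assume $a,b$ are nonunits. I then apply the property to the nonunit triple $x=(a,0)$, $y=(1,0)$, $z=(b,1)$, whose product is $(ab,0)\in I$; since $ab\notin\phi_1(I_1)$ the product avoids $\phi(I)=\phi_1(I_1)\times\phi_2(R_2)$, so $xyz\in I-\phi(I)$. This yields $xy=(a,0)\in I$ or $z=(b,1)\in I$; as $1\in R_2$, these translate exactly to $a\in I_1$ or $b\in I_1$, so $I_1$ is $\phi_1$-prime.

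Finally, to upgrade $\phi_1$-primeness to primeness under the stated hypothesis, I would take arbitrary nonunits $a,b\in R_1$ with merely $ab\in I_1$. When $ab\notin\phi_1(I_1)$ the previous paragraph already gives $a\in I_1$ or $b\in I_1$, so the only remaining case is $ab\in\phi_1(I_1)$. Here the key observation is that the hypothesis that $\phi_2(R_2)$ is not the unique maximal ideal of $R_2$ (in particular when $R_2$ is not quasi-local) supplies a nonunit $d\in R_2$ with $d\notin\phi_2(R_2)$: if every nonunit of $R_2$ lay in the proper ideal $\phi_2(R_2)$, then the nonunits would form an ideal equal to $\phi_2(R_2)$, forcing $R_2$ to be quasi-local with maximal ideal $\phi_2(R_2)$. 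With such a $d$, I apply the property to the nonunit triple $x=(a,1)$, $y=(1,d)$, $z=(b,1)$: its product $(ab,d)$ lies in $I$, but the second coordinate $d\notin\phi_2(R_2)$ keeps it out of $\phi(I)$, so $xyz\in I-\phi(I)$ even though $ab\in\phi_1(I_1)$. The conclusion $xy=(a,d)\in I$ or $z=(b,1)\in I$ again reads off as $a\in I_1$ or $b\in I_1$, proving $I_1$ prime.

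The main obstacle is exactly this last step, which is the passage through $ab\in\phi_1(I_1)$ that $\phi_1$-primeness cannot detect: the whole point is that a nonunit of $R_2$ outside $\phi_2(R_2)$ lets the second coordinate push the product clear of $\phi(I)$. I expect the delicate bookkeeping to be the precise correspondence between the hypothesis on $\phi_2(R_2)$ and the existence of that nonunit $d$; in the borderline situation where $\phi_2(R_2)=R_2$ no such $d$ exists, and one must check this is consistent with the statement (there $I$ being $\phi$-1-absorbing prime only forces $I_1$ to be $\phi_1$-prime). Verifying that each chosen triple consists of nonunits, lands in $I$, and avoids $\phi(I)$ is routine, using only $1\neq0$ in each factor together with $\phi=\phi_1\times\phi_2$.
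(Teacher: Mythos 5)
Your proof is correct and follows essentially the same route as the paper's: the same style of test triples (up to the symmetry swapping the two factors) shows that one factor must be the whole ring when $\phi(I)\neq I$, that the surviving factor is $\phi_i$-prime, and---by inserting a nonunit of the other ring lying outside $\phi_j(R_j)$ to keep the product out of $\phi(I)$---that it is in fact prime. The borderline case you flag, $\phi_2(R_2)=R_2$, is a genuine defect, but it is a defect of the theorem's statement that the paper's own proof shares verbatim (it likewise extracts a nonunit $x\in R_1-\phi_1(R_1)$ from the hypothesis that $\phi_1(R_1)$ is not the unique maximal ideal, which is impossible when $\phi_1(R_1)=R_1$), and your observation that only $\phi_1$-primeness survives there is the correct conclusion.
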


\begin{proof}
Suppose that $I$ is a $\phi$-1-absorbing prime ideal of $R.\ $First, we will
show that $I_{1}\ $is a $\phi_{1}$-prime ideal of $R_{1}.\ $To see this,
choose $x,y\in R\ $such that $xy\in I_{1}-\phi_{1}(I_{1}).\ $Then we have
$(x,0)(1,0)(y,0)=(xy,0)\in I-\phi(I)\ $for some nonunits $(x,0),(1,0),(y,0)\in
R.\ $Since $I$ is a $\phi$-1-absorbing prime ideal of $R,\ $we get either
$(x,0)(1,0)=(x,0)\in I\ $or $(y,0)\in I$ implying that $x\in I_{1}\ $or $y\in
I_{1}.\ $Therefore, $I_{1}\ $is a $\phi_{1}$-prime ideal of $R_{1}.\ $Similar
argument shows that $I_{1}\ $is a $\phi_{2}$-prime ideal of $R_{2}.\ $Now
assume that $\phi(I)\neq I.\ $Then either $\phi_{1}(I_{1})\neq I_{1}\ $or
$\phi_{2}(I_{2})\neq I_{2}.\ $Suppose that $\phi_{1}(I_{1})\neq I_{1}.\ $Then
there exists $x\in I_{1}-\phi_{1}(I_{1}).\ $This implies that
$(1,0)(1,0)(x,1)=(x,0)\in I-\phi(I).\ $Then we have either $1\in I_{1}\ $or
$1\in I_{2},\ $that is, $I_{1}=R_{1}\ $or $I_{2}=R_{2}.\ $Without loss of
generality, we may assume that $I_{1}=R_{1}.\ $Now, we will show that
$I=R_{1}\times I_{2}\ $and $I_{2}$ is prime in $R_{2}\ $if $\phi_{1}(R_{1}%
)\ $is not unique maximal ideal of $R_{1}.\ $Let $ab\in I_{2}\ $for some
elements $a,b\in R_{2}.$\ If $a$ or $b$ is unit, we are done. So assume that
$a,b\ $are nonunits in $R_{2}.\ $Since $\phi_{1}(R_{1})\ $is not unique
maximal ideal of $R_{1}$,$\ $there exists a nonunit element $x\in R_{1}%
-\phi_{1}(R_{1}).\ $Then we have $(x,1)(1,a)(1,b)=(x,ab)\in I-\phi(I).\ $Since
$I$ is a $\phi$-1-absorbing prime ideal of $R,\ $we have either
$(x,1)(1,a)=(x,a)\in I\ $or $(1,b)\in I$ implying or $a\in I_{2}\ $or $b\in
I_{2}.\ $Therefore, $I_{2}\ $is a prime ideal of $R_{2}.\ $
\end{proof}

Recall that a commutative ring $R\ $is said to be a \textit{quasi-local} if it
has a unique maximal ideal \cite{Sharp}. Otherwise, we say $R\ $is not
quasi-local or non-quasi-local.

\begin{theorem}
\label{tnloc}Let $R_{1},R_{2}$ be commutative rings such that $\phi_{i}%
(I_{i})$ is not unique maximal ideal of $R_{i}\ $(e.g. $R_{i}\ $is not
quasi-local) and $\phi_{i}:\mathcal{I}(R_{i})\rightarrow\mathcal{I}(R_{i}%
)\cup\{\emptyset\}$ for each $i=1,2$. Suppose that $I=I_{1}\times I_{2}$ is
nonzero ideal$,\ $where $I_{i}\ $is an ideal of $R_{i}$ for each
$i=1,2,\ \phi=\phi_{1}\times\phi_{2}$ and $\phi(I)\neq I.\ $Then the following
statements are equivalent.

(i)\ $I\ $is a $\phi$-1-absorbing prime ideal of $R=R_{1}\times R_{2}.$

(ii)\ $I=I_{1}\times R_{2}\ $for some prime ideal $I_{1}\ $of $R_{1}\ $and
$I=R_{1}\times I_{2}\ $for some prime ideal $I_{2}\ $of $R_{2}.$

(iii)\ $I\ $is a prime ideal of\ $R.\ $

(iv)\ $I\ $is a weakly prime ideal of $R.\ $

(v)\ $I\ $is a $1$-aborbing prime ideal of $R.$
\end{theorem}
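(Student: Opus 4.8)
The plan is to prove the five statements equivalent through the cyclic chain $(i)\Rightarrow(ii)\Rightarrow(iii)\Rightarrow(iv)\Rightarrow(v)\Rightarrow(i)$, in which four of the links are short and one carries the real content. For $(i)\Rightarrow(ii)$ I would feed the hypotheses directly into Theorem \ref{tm1}: since $I=I_{1}\times I_{2}$ is $\phi$-1-absorbing prime and $\phi(I)\neq I$, that theorem forces either $I=I_{1}\times R_{2}$ with $I_{1}$ a $\phi_{1}$-prime ideal of $R_{1}$, or $I=R_{1}\times I_{2}$ with $I_{2}$ a $\phi_{2}$-prime ideal of $R_{2}$. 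In the first case $I_{2}=R_{2}$, so $\phi_{2}(I_{2})=\phi_{2}(R_{2})$, which by the present hypothesis is not the unique maximal ideal of $R_{2}$; the upgrade clause of Theorem \ref{tm1}(ii) then promotes $I_{1}$ from $\phi_{1}$-prime to prime. The second case is symmetric, using that $\phi_{1}(I_{1})=\phi_{1}(R_{1})$ is not the unique maximal ideal of $R_{1}$. This yields exactly the two admissible forms in $(ii)$.

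The implications $(ii)\Rightarrow(iii)$ and $(iii)\Rightarrow(iv)$ are routine. If $I=I_{1}\times R_{2}$ with $I_{1}$ prime, then $R/I\cong R_{1}/I_{1}$ is an integral domain, so $I$ is prime, and symmetrically in the other form; this gives $(iii)$. Every prime ideal is weakly prime, since the weak-prime hypothesis $0\neq xy\in I$ is stronger than $xy\in I$, giving $(iv)$. Likewise $(v)\Rightarrow(i)$ is immediate from Proposition \ref{pfirst}(i): because $\phi_{\emptyset}\leq\phi$ always holds, any $1$-absorbing prime ideal (that is, a $\phi_{\emptyset}$-1-absorbing prime ideal) is automatically $\phi$-1-absorbing prime.

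The one substantive link is $(iv)\Rightarrow(v)$, and this is where the nonzero hypothesis on $I$ enters. Assume $I=I_{1}\times I_{2}$ is a nonzero weakly prime ideal. I would first show $I_{1}=R_{1}$ or $I_{2}=R_{2}$: by symmetry take $I_{1}\neq0$ and pick $0\neq a\in I_{1}$; if $I_{2}\neq R_{2}$, choose $d\in R_{2}\setminus I_{2}$ and set $x=(1,0)$, $y=(a,d)$, so that $xy=(a,0)$ is a nonzero element of $I$ while neither factor lies in $I$ (the former would force $1\in I_{1}$, the latter $d\in I_{2}$), contradicting weak primeness. Hence $I=I_{1}\times R_{2}$ with $I_{1}$ proper. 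To see $I_{1}$ is prime, take $ab\in I_{1}$ and test $x=(a,1)$, $y=(b,1)$: here $xy=(ab,1)\in I$ is nonzero, so $x\in I$ or $y\in I$, i.e.\ $a\in I_{1}$ or $b\in I_{1}$. Thus $I$ is prime, and since every prime ideal is $1$-absorbing prime (as recalled in the introduction), $(v)$ follows.

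I expect the main obstacle to be the bookkeeping in $(i)\Rightarrow(ii)$, specifically reconciling the upgrade hypothesis of Theorem \ref{tm1} (phrased via $\phi_{j}(R_{j})$) with the present hypothesis (phrased via $\phi_{i}(I_{i})$); this works precisely because the surviving component equals the whole ring, so $\phi_{i}(I_{i})=\phi_{i}(R_{i})$. By contrast the element-chasing in $(iv)\Rightarrow(v)$ is elementary once one commits to testing weak primeness against products of the form $(1,0)(a,d)$ and $(a,1)(b,1)$—the first exploiting the nonzero hypothesis to eliminate the mixed form $I_{1}\times I_{2}$ with both components proper, and the second converting primality of $I$ into primality of the surviving component.
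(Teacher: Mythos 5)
Your proof is correct and follows essentially the same route as the paper's: $(i)\Rightarrow(ii)$ via Theorem \ref{tm1} with exactly the bookkeeping you describe (the surviving component equals the whole ring, so $\phi_{i}(I_{i})=\phi_{i}(R_{i})$ and the upgrade clause applies), the easy implications among $(ii)$--$(v)$, and $(v)\Rightarrow(i)$ via $\phi_{\emptyset}\leq\phi$ and Proposition \ref{pfirst}. The only difference is organizational: the paper handles $(iii)\Leftrightarrow(iv)$ by citing \cite[Theorem 7]{AnSmi} (a nonzero weakly prime ideal of a decomposable ring is prime), whereas you arrange the statements in a cycle and prove that fact directly inside your $(iv)\Rightarrow(v)$ step with the test products $(1,0)(a,d)$ and $(a,1)(b,1)$ --- a valid, self-contained substitute for the citation.
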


\begin{proof}
$(i)\Rightarrow(ii):\ $Follows from Theorem \ref{tm1}.

$(ii)\Rightarrow(iii):\ $Clear.

$(iii)\Leftrightarrow(iv):\ $Follows from \cite[Theorem 7]{AnSmi}.

$(iii)\Rightarrow(v):\ $Follows from \cite[Definition 2.1]{YasNik}.

$(v)\Rightarrow(i):\ $Follows from the fact that $\phi_{\emptyset}\leq\phi$
and Proposition \ref{pfirst}.
\end{proof}

\begin{theorem}
\label{tgen}Let $R_{1},R_{2}$ be commutative rings such that $\phi_{i}(I_{i})$
is not unique maximal ideal of $R_{i}\ $(e.g. $R_{i}\ $is not quasi-local) and
$\phi_{i}:\mathcal{I}(R_{i})\rightarrow\mathcal{I}(R_{i})\cup\{\emptyset\}$
for each $i=1,2,\ldots,n.\ $Suppose that $I=I_{1}\times I_{2}\times
\cdots\times I_{n}$ is nonzero ideal$,\ $where $I_{i}\ $is an ideal of $R_{i}$
for each $i=1,2,\ldots,n,\ \phi=\phi_{1}\times\phi_{2}\times\cdots\times
\phi_{n}$ and $\phi(I)\neq I.\ $Then the following statements are equivalent.

$I\ $is a $\phi$-1-absorbing prime ideal of $R=R_{1}\times R_{2}.$

(ii)\ $I=R_{1}\times R_{2}\times\cdots\times R_{t-1}\times I_{t}\times
R_{t+1}\times\cdots\times R_{n}\ $for some prime ideal $I_{t}\ $of $R_{t}%
$\ and $1\leq t\leq n\ .$

(iii)\ $I\ $is a prime ideal of\ $R.\ $

(iv)\ $I\ $is a weakly prime ideal of $R.\ $

(v)\ $I\ $is a $1$-absorbing prime ideal of $R.$
\end{theorem}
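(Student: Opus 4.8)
The plan is to prove the cycle of implications $(i)\Rightarrow(ii)\Rightarrow(iii)\Leftrightarrow(iv)$, $(iii)\Rightarrow(v)\Rightarrow(i)$, reducing the one substantive step to the two-factor case already settled in Theorem \ref{tnloc}. I would dispatch the routine implications first, exactly as in the proof of Theorem \ref{tnloc}: for $(ii)\Rightarrow(iii)$ I would use the standard fact that an ideal of the shape $R_1\times\cdots\times I_t\times\cdots\times R_n$ with $I_t$ prime is a prime ideal of the product; for $(iii)\Leftrightarrow(iv)$ I would cite \cite[Theorem 7]{AnSmi}; $(iii)\Rightarrow(v)$ is the observation that every prime ideal is $1$-absorbing prime (\cite[Definition 2.1]{YasNik}); and $(v)\Rightarrow(i)$ follows from $\phi_{\emptyset}\leq\phi$ together with Proposition \ref{pfirst}(i).

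The heart of the argument is $(i)\Rightarrow(ii)$, which I would prove by grouping the factors. If $n=2$ this is precisely Theorem \ref{tnloc}, so I may assume $n\geq3$ and write $R=R_1\times R'$ with $R'=R_2\times\cdots\times R_n$, $I=I_1\times I'$ with $I'=I_2\times\cdots\times I_n$, and $\phi=\phi_1\times\phi'$ with $\phi'=\phi_2\times\cdots\times\phi_n$. Regarded this way, $I$ is a nonzero $\phi$-1-absorbing prime ideal of the two-factor product $R_1\times R'$ with $\phi(I)\neq I$. To invoke Theorem \ref{tnloc} I must verify its hypotheses survive the regrouping: $\phi_1(I_1)$ is not the unique maximal ideal of $R_1$ by assumption, and $\phi'(I')$ is not the unique maximal ideal of $R'$ because $R'$, being a product of at least two nonzero rings, is never quasi-local and hence has no unique maximal ideal at all.

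Theorem \ref{tnloc} then forces one of two shapes. Either $I=I_1\times R'$ with $I_1$ a prime ideal of $R_1$, which is conclusion $(ii)$ with $t=1$; or $I=R_1\times I'$ with $I'$ a genuine prime ideal of $R'=R_2\times\cdots\times R_n$. In the latter case $I_1=R_1$, and I would appeal to the standard description of the prime ideals of a finite product (see \cite{AtMac}) to conclude that $I'=R_2\times\cdots\times R_{t-1}\times I_t\times R_{t+1}\times\cdots\times R_n$ for some prime ideal $I_t$ of $R_t$ with $2\leq t\leq n$; combined with $I_1=R_1$ this is again conclusion $(ii)$.

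The main obstacle I anticipate is bookkeeping rather than a new idea. I must be careful that the induced function $\phi'=\phi_2\times\cdots\times\phi_n$ genuinely meets the non-quasi-local requirement on $R'$ and that the condition $\phi(I)\neq I$ transfers correctly under the regrouping, and I must cleanly read off the single distinguished index $t$ from whichever factor of $R'$ carries the prime. The point that lets me avoid an induction on the full statement is that Theorem \ref{tnloc} delivers $I'$ as an honest prime ideal of $R'$, not merely a $\phi'$-1-absorbing prime ideal; consequently no hypothesis of the form $\phi'(I')\neq I'$ is needed at that stage, and the elementary prime-in-a-product structure theorem finishes the job.
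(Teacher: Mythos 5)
Your argument is correct, and it takes a genuinely different route from the paper's. The paper proves the theorem by induction on $n$: the base case $n=2$ is Theorem \ref{tnloc}, and the inductive step splits off the \emph{last} factor, writing $R=R'\times R_n$ with $R'=R_1\times\cdots\times R_{n-1}$, then invokes the induction hypothesis together with Theorem \ref{tnloc}. You instead split off the \emph{first} factor, apply Theorem \ref{tnloc} once to $R_1\times R'$, and then finish with the elementary classification of prime ideals in a finite product of rings --- no induction on the full equivalence is needed. Your observation that the hypotheses of Theorem \ref{tnloc} transfer to the regrouped pair because $R'$ is a product of at least two nonzero rings and hence has no unique maximal ideal is exactly the point that makes the regrouping legitimate, and your remark that Theorem \ref{tnloc} outputs an honest prime ideal of $R'$ (so no condition of the form $\phi'(I')\neq I'$ is needed downstream) is what lets you bypass the induction hypothesis entirely. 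This is arguably cleaner than the paper's version: the paper's one-line inductive step leaves unverified that the ideal $I'$ handed to the induction hypothesis satisfies the hypotheses ``$I'$ nonzero'' and ``$\phi'(I')\neq I'$'', whereas your route never needs them. The cost is that you must cite (or prove) the standard fact that every prime ideal of $R_2\times\cdots\times R_n$ has the form $R_2\times\cdots\times I_t\times\cdots\times R_n$ with $I_t$ prime, but that is folklore and unobjectionable. One small caution: statement (ii) of Theorem \ref{tnloc} is misprinted in the paper with an ``and'' where an ``or'' is intended; you have read it with the correct disjunctive meaning, consistent with Theorem \ref{tm1}, and your proof is sound under that reading.
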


\begin{proof}
We use induction on $n.$ If $n=1,\ $the claim is clear. If $n=2,\ $the claim
follows from Theorem \ref{tnloc}. Now, assume that $(i)\Leftrightarrow
(ii)\Leftrightarrow(iii)\Leftrightarrow(iv)\Leftrightarrow(v)\ $is true for
all $k<n.\ $Let $I^{\prime}=I_{1}\times I_{2}\times\cdots\times I_{n-1}%
,\ R^{\prime}=R_{1}\times R_{2}\times\cdots\times R_{n-1}$ and $\phi^{\prime
}=\phi_{1}\times\phi_{2}\times\cdots\times\phi_{n-1}.\ $Then note that
$I=I^{\prime}\times I_{n},\ R=R^{\prime}\times R_{n}\ $and $\phi=\phi^{\prime
}\times\phi_{n}.\ $The rest follows from induction hypothesis and Theorem
\ref{tnloc}.
\end{proof}

\begin{lemma}
Let $(R,m)$ be a quasi-local ring and $\mathfrak{m}^{3}\subseteq\phi(I)$ for
every proper ideal $I$ of $R$. Then, every proper ideal of $R$ is a $\phi
$-1-absorbing prime ideal.
\end{lemma}

\begin{proof}
Let $I$ be a nonzero proper ideal of $R$. Assume that $I$ is not $\phi
$-1-absorbing prime ideal. Then, there exist nonunit elements $x,y,z\in R$
such that $xyz\in I-\phi(I)$ but $xy\not \in I$ and $z\not \in I$. Since
$x,y,z$ are nonunits, they are elements of $\mathfrak{m}$. So, $xyz\in
\mathfrak{m}^{3}\subseteq\phi(I)$, a contradiction.
\end{proof}

A ring $R\ $is said to be an \textit{indecomposable ring} if its all
idempotents are $0\ $and $1.$Otherwise, we say $R\ $is decomposable. It is
well know that a ring $R\ $is decomposable if and only if $R=R_{1}\times
R_{2}\ $for some commutative rings $R_{1}\ $and $R_{2}.$

Recall that a commutative ring $R\ $is said to be a von Neumann regular ring
if its each ideal is idempotent, or equivalently, for each $x\in R,\ $there
exists an idempotent element $e\in R\ $such that $(x)=(e)\ $\cite{von}. The
concept of von Neumann regular rings and its generalizations have drawn
considerable interest and have been widely studied by many authors. See, for
example, \cite{AnChu}, \cite{JaTe} and \cite{JaTeKo}. Now, in the following,
we characterize all rings over which every proper ideal is almost 1-absorbing
prime ideal.

\begin{theorem}
\label{tmain}Let $R\ $be a ring. Then every proper ideal is almost
$1$-absorbing prime if and only $(R,\mathfrak{m}\mathcal{)}\ $is either
quasi-local with $\mathfrak{m}^{3}=(0)$ or $R\ $is a von Neumann regular ring.
\end{theorem}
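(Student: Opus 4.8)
The plan is to prove the two implications separately, splitting the forward direction according to whether $R$ is decomposable. The ($\Leftarrow$) direction is immediate. If $R$ is von Neumann regular, then every ideal is idempotent, so $I-I^{2}=\emptyset$ for every ideal $I$; since here $\phi(I)=I^{2}$, the hypothesis $xyz\in I-\phi(I)$ is never met and every proper ideal is vacuously almost $1$-absorbing prime. If instead $(R,\mathfrak{m})$ is quasi-local with $\mathfrak{m}^{3}=(0)$, then $\mathfrak{m}^{3}=(0)\subseteq I^{2}=\phi(I)$ for every proper ideal $I$, so the lemma immediately preceding this theorem, applied with $\phi=\phi_{2}$, shows that every proper ideal is almost $1$-absorbing prime.

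For the ($\Rightarrow$) direction, assume every proper ideal of $R$ is almost $1$-absorbing prime, and suppose first that $R$ is decomposable, say $R=R_{1}\times R_{2}$ with $R_{1},R_{2}\neq 0$. Since $(I_{1}\times I_{2})^{2}=I_{1}^{2}\times I_{2}^{2}$, the function $\phi=\phi_{2}$ has the product form required by Theorem \ref{tm1}. For any proper ideal $J$ of $R_{1}$, the proper ideal $J\times(0)$ has both components proper, so it can satisfy neither (ii) nor (iii) of Theorem \ref{tm1}; hence (i) holds, giving $(J\times(0))^{2}=J\times(0)$ and thus $J^{2}=J$. Therefore every proper ideal of $R_{1}$, and symmetrically of $R_{2}$, is idempotent, so $R_{1}$ and $R_{2}$ are von Neumann regular and hence so is $R=R_{1}\times R_{2}$.

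Now suppose $R$ is indecomposable. I would first record an idempotent trick: if a nonunit $w$ satisfies $w=tw^{2}$, then $tw$ is idempotent, so $tw\in\{0,1\}$; as $tw=1$ would force $w$ to be a unit, we get $tw=0$ and hence $w=tw^{2}=0$. Thus a nonzero nonunit never lies in the square of the principal ideal it generates. The crucial step is to show $R$ is quasi-local. If it were not, I would pick distinct maximal ideals $\mathfrak{m}_{1},\mathfrak{m}_{2}$ and, by comaximality, nonunits $a\in\mathfrak{m}_{1}$, $b\in\mathfrak{m}_{2}$ with $a+b=1$; then $a\notin\mathfrak{m}_{2}$, $b\notin\mathfrak{m}_{1}$, and indecomposability gives $ab=a-a^{2}\neq 0$. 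Put $I=(ab)$. From $ab=ab(a+b)=a^{2}b+ab^{2}$ it follows that $a^{2}b$ and $ab^{2}$ cannot both lie in $I^{2}=(a^{2}b^{2})$, for otherwise $ab\in(a^{2}b^{2})$ and the idempotent trick forces $ab=0$. Say $a^{2}b\notin I^{2}$; then the triple $(a,a,b)$ exhibits a failure of the almost $1$-absorbing property of $I$, since $a^{2}b\in I-I^{2}$ while $a^{2}\notin I$ (localize at $\mathfrak{m}_{2}$, where $a$ is a unit, so $a^{2}=rab$ would give $a=rb$, a unit equal to a nonunit) and $b\notin I$ (localize at $\mathfrak{m}_{1}$, where $b$ is a unit, so $b=rab$ would give $1=ra\in\mathfrak{m}_{1}R_{\mathfrak{m}_{1}}$). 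This contradiction shows $R$ is quasi-local.

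Finally, with $R$ quasi-local, let $\mathfrak{m}$ be its maximal ideal, which is exactly its set of nonunits, and take any $a,b,c\in\mathfrak{m}$. For $I=(abc)$: if $abc\in I^{2}$ then $abc=r(abc)^{2}$, so $abc(1-rabc)=0$, and since $1-rabc$ is a unit, $abc=0$; otherwise $abc\in I-I^{2}$ and the property gives $ab\in(abc)$ or $c\in(abc)$, each yielding $abc=0$ by the same unit argument. Hence $\mathfrak{m}^{3}=(0)$, which completes the indecomposable case and the proof. I expect the quasi-local reduction in the indecomposable case to be the main obstacle: the delicate point is manufacturing an explicit violating triple for $(ab)$ and verifying the non-membership conditions, which is precisely where localization at the two maximal ideals together with the identity $ab=a^{2}b+ab^{2}$ carry the argument.
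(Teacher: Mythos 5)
Your proof is correct, and while the backward direction and the decomposable case of the forward direction coincide with the paper's argument (both invoke the preceding Lemma, idempotency of ideals in von Neumann regular rings, and Theorem \ref{tm1} applied to $J\times(0)$), your treatment of the indecomposable case is genuinely different. The paper proceeds by showing $(a^{3})=(a^{4})$ for every $a\in R$: it localizes at each maximal ideal, uses Theorem \ref{loc} to transfer the almost $1$-absorbing property to $R_{\mathfrak{m}}$, applies the property to the principal ideal $(\frac{a^{3}}{1})$, concludes that $(a^{3})$ is a finitely generated idempotent ideal and hence generated by an idempotent, and in the indecomposable case deduces $a^{3}=0$ for every nonunit $a$, so that the nonunits form the ideal $\sqrt{0}$ and $R$ is quasi-local. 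You instead prove quasi-locality first, by manufacturing an explicit violating triple $(a,a,b)$ for the ideal $(ab)$ out of a comaximal pair $a+b=1$, using the identity $ab=a^{2}b+ab^{2}$ to guarantee that one of $a^{2}b,ab^{2}$ escapes $(ab)^{2}$ and the idempotent trick $w=tw^{2}\Rightarrow w=0$ to rule out degeneracies; then you establish $\mathfrak{m}^{3}=(0)$ by applying the hypothesis directly to each principal ideal $(abc)$. Your route avoids any reliance on the localization transfer theorem (your localizations at $\mathfrak{m}_{1},\mathfrak{m}_{2}$ are only used to check two elementary non-membership claims, which could equally be done by reducing mod $\mathfrak{m}_{i}$), and it has the additional merit of proving $abc=0$ for \emph{arbitrary} triples $a,b,c\in\mathfrak{m}$, whereas the paper's argument literally yields only $a^{3}=0$ for each single nonunit $a$ and then asserts $\mathfrak{m}^{3}=(0)$ without addressing mixed products; your version closes that small gap. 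The paper's approach, in exchange, is more uniform, since the single computation $(a^{3})=(a^{3})^{2}$ feeds both the indecomposable and the decomposable branches.
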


\begin{proof}
$(\Leftarrow):\ $Suppose that $(R,\mathfrak{m}\mathcal{)}\ $is quasi-local
with $\mathfrak{m}^{3}=(0).\ $Then by previous Lemma, every ideal is almost
$1$-absorbing prime. If $R\ $is von Neumann regular ring, then every ideal is
idempotent so that every ideal is almost $1$-absorbing prime.

$(\Rightarrow):\ $Now, suppose that every proper ideal is almost $1$-absorbing
prime. First, we will show that $(a^{3})=(a^{4})$ for each element $a\in
R.\ $If $a$ is unit, then we are done. So assume that $a$ is not unit. Take a
maximal ideal $\mathfrak{m}$ of $R.\ $If $a\notin\mathfrak{m}\mathcal{,}$ then
$\frac{a}{1}$ is unit in $R_{\mathfrak{m}}\ $so that we have $(a^{3}%
)_{\mathfrak{m}}=(a^{4})_{\mathfrak{m}}.\ $So suppose that $a\in
\mathfrak{m}\mathcal{.}$Then by Theorem \ref{loc}, every proper ideal of
$R_{\mathfrak{m}}$ is almost 1-absorbing prime. Since $\frac{a^{3}}{1}%
\in(\frac{a^{3}}{1})$ and $(\frac{a^{3}}{1})$ is almost 1-absorbing prime, we
have either $\frac{a^{2}}{1}\in(\frac{a^{3}}{1})$ or $\frac{a^{3}}{1}\in
(\frac{a^{3}}{1})^{2},\ $which implies that $(a^{3})_{\mathfrak{m}}%
=(a^{4})_{\mathfrak{m}}.$\ Since $(a^{3})_{\mathfrak{m}}=(a^{4})_{\mathfrak{m}%
}$ for each maximal ideal $\mathfrak{m}\mathcal{\ }$of $R,$ we have
$(a^{3})=(a^{4})$ and thus $(a^{3})=(a^{3})^{2}.\ $This implies that
$(a^{3})=(e)\ $for some idempotent $e\in R.\ $If $R\ $is not decomposable
ring, then for each nonunit $a\in R,\ a^{3}=(0)\ $and this shows that
$(R,\mathfrak{m}\mathcal{)}\ $is quasi-local with $\mathfrak{m}^{3}%
=(0),\ $where $\mathfrak{m}=\sqrt{0}.\ $Now, suppose that $R=R_{1}\times
R_{2}\ $for some commutative rings $R_{1}\ $and $R_{2}.\ $If $R_{1}\ $is not
von Neumann regular, then there exists an ideal $I\ $of $R\ $such that
$I^{2}\neq I.\ $Now take the ideal $J=I\times0\ $of $R.\ $Since $J\ $is almost
1-absorbing prime, by Theorem \ref{tm1}, $J=I\times0=I\times R_{2}\ $which is
a contradiction. Thus $R_{1}\ $is a von Neumann regular ring. Similarly,
$R_{2}\ $is von Neumann regular ring and so is $R=R_{1}\times R_{2}.$
\end{proof}

\end{document}